\documentclass[reqno]{amsart}
\usepackage{amsmath,amsthm,amssymb,hyperref,graphicx,mathrsfs,mathtools} 

\newtheorem{theorem}{Theorem}
\newtheorem{corollary}{Corollary}
\newtheorem{lemma}{Lemma}
\newtheorem{thmx}{Theorem}
\newtheorem{remark}{Remark}

\allowdisplaybreaks

\begin{document}
\title[INEQUALITIES CONCERNING RATIONAL FUNCTIONS WITH PRESCRIBED
POLES ]{INEQUALITIES CONCERNING RATIONAL FUNCTIONS WITH PRESCRIBED
POLES}
\author{N. A. Rather$^1$}
\author{Tanveer Bhat$^2$}
\author{Danish Rashid Bhat$^3$}
\address{$^{1}$Department of Mathematics, University of Kashmir, Srinagar-190006, India}
\email{$^1$dr.narather@gmail.com, $^2$Tanveerbhat054@gmail.com, $^3$danishmath1904@gmail.com}

\begin{abstract}
Let $\Re_n$ be the set of all rational functions of the type $r(z) = p(z)/w(z),$ where $p(z)$ is a polynomial of degree at most $n$ and  $w(z) = \prod_{j=1}^{n}(z-a_j)$, $|a_j|>1$ for $1\leq j\leq n$. In this paper, we set up some results for rational functions with fixed poles and restricted zeros. The obtained results bring forth generalizations and refinements of some known inequalities for rational functions and in turn produce generalizations and refinements of some polynomial inequalities as well.
\\
\smallskip
\newline
\noindent \textbf{Keywords:} Rational functions, polynomials , inequalities. \\
\noindent \textbf{Mathematics Subject Classification (2020)}: 30A10, 30C10, 30C15.
\end{abstract}

\maketitle

\section{\textbf{Introduction }}
Let $P_n$ denote the class of all complex polynomials of degree at most $n$ and let $D_{k-}= \left\{z :|z|<k\right\}$, $D_{k+}= \left\{z : |z| > k\right\}$ and $T_{k}= \left\{z : |z| = k\right\}$. For $a_j\in{\mathbb{C}}$, $j = 1, 2, \dots , n,$  we write 

\begin{align*}
w(z) := \prod_{j=1}^{n}(z -a_j),\qquad B(z):= \prod_{j=1}^{n}\left(\frac{1 - \overline{a_j}z}{z- a_j}\right)
\end{align*} 
  and
  \begin{align*} 
  \Re_n := \Re_n(a_1, a_2, \dots , a_n) = \left\{\frac{p(z)}{w(z)}; p\in P_n\right\}.
  \end{align*}
 Then $\Re_n$ is the set of all rational functions having poles possibly at $a_1, a_2,\dots,a_n$ and with finite limit at infinity. It is clear that $B(z)\in \Re_n$ and $|B(z)|$ = 1 for $z\in T_1.$ Further for any complex valued function $f$ defined on  $T_1,$ we set $ \| f \| =\sup_{z\in T_1}|f(z)|,$ the Chebshev norm of $f$ on $T_1.$ Throughout this paper, we shall assume that all the poles $a_j$, $j = 1, 2, \dots, n$ lie in $D_{1+}$.
 \\~\\
If $p\in P_n$, then concerning the estimate of $\|p^\prime\|$ in terms of $\|p\|$ on $T_1,$ we have the following famous result known as Bernstein's inequality \cite{SB}. 
  \begin{thmx}\label{tA}
 If $p\in P_n$, then
 \begin{align}\label{tA1}
 \|p^\prime\|\leq n\|p\|
 \end{align}
with equality only for $p(z) = \lambda z^n,$ $\lambda\neq 0$ being a complex number.  
  \end{thmx}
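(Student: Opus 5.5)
We will prove Bernstein's inequality by a Laguerre/Gauss–Lucas type argument: compare $p$ with a multiple of $z^n$ and use Rouché's theorem. Assume $p\not\equiv 0$, set $M=\|p\|$, and fix $\lambda$ with $|\lambda|>1$.

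\textbf{Step 1.} Since $|p(z)|\le M<|\lambda M z^n|$ on $T_1$, Rouché's theorem shows that $F(z)=p(z)-\lambda M z^n$ has all $n$ zeros in $D_{1-}$. Here $F$ has degree exactly $n$, with leading coefficient $a_n-\lambda M\neq 0$, because $|a_n|\le M$ by Cauchy's estimate.

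\textbf{Step 2.} By the Gauss–Lucas theorem, $F'(z)=p'(z)-n\lambda M z^{n-1}$ has all its zeros in the convex hull of the zeros of $F$. In particular it has no zeros on $D_{1+}\cup T_1$.

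\textbf{Step 3.} We pass from this to a pointwise inequality. Suppose that $|p'(z_0)|>nM$ for some $z_0\in T_1$. Choose
\[
\lambda=\frac{p'(z_0)}{nMz_0^{n-1}}.
\]
Then $|\lambda|=|p'(z_0)|/(nM)>1$ and $F'(z_0)=0$, which contradicts Step 2. Hence $|p'(z)|\le nM$ on $T_1$, that is, $\|p'\|\le n\|p\|$.

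\textbf{Step 4 (the equality case).} This is the main obstacle. The Rouché argument only gives the strict conclusion for $|\lambda|>1$, so a limiting argument loses the information needed for equality. Instead we use the Fejér kernel or interpolation representation of $zp'(z)$. An alternative is to apply the above to $q(z)=z^n\overline{p(1/\bar z)}$ and obtain the sharper bound
\[
|p'(z)|+|q'(z)|\le n\|p\| \quad \text{on } T_1,
\]
which is proved by the same Rouché/Gauss–Lucas argument applied to $p-\lambda q$-type combinations. If equality $|p'(z_0)|=nM$ holds, this forces $q'(z_0)=0$ for every such extremal point. One then shows that $q$ is constant, so $p=\lambda z^n$.

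The cleanest route, which we would try first, is M. Riesz's interpolation formula
\[
zp'(z)=\frac{n}{2}p(z)+\frac{1}{N}\sum_{k} c_k\,\zeta_k\, p(z\zeta_k).
\]
Here the $c_k$ are explicit coefficients, the $\zeta_k$ are roots of unity, and $\sum |c_k|/N=n/2$. Equality in the triangle inequality at $z_0$ forces $|p(z_0\zeta_k)|=M$ with aligned arguments at all $2n$ nodes. Together with $|p(z_0)|=M$, this yields an identity that pins down $p(z)=\lambda z^n$ with $|\lambda|=M$.
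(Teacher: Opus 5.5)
The paper does not prove this statement; it quotes it as Bernstein's classical theorem, so your proposal has to stand on its own.

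\textbf{The inequality.} Steps 1--3 stand. This is the standard Rouch\'e--Gauss--Lucas argument, and it correctly proves $\|p'\|\le n\|p\|$. It also handles the fine points: $F$ has exact degree $n$ because the leading coefficient of $p$ is at most $\|p\|$ in modulus, and the convex hull of points of $D_{1-}$ stays in $D_{1-}$.

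\textbf{The equality case is not proved.} The clause ``equality only for $p(z)=\lambda z^n$'' is part of the statement, and Step 4 lists three routes without completing any.

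In the second route, the inequality $|p'(z)|+|q'(z)|\le n\|p\|$ is only asserted. Its usual proof is not a Rouch\'e argument on ``$p-\lambda q$''. It applies the Lax-type comparison $|P'|\le|Q'|$ on $T_1$ to $P=p-\lambda\|p\|$ with $|\lambda|>1$. More seriously, the step from $q'(z_0)=0$ to ``$q$ is constant'' has no justification. Vanishing of $q'$ at one point, or at finitely many extremal points, forces nothing of the kind.

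In the third route, the formula is never fixed: not the nodes, not the weights, and above all not the sign pattern of the weights. The identity that is supposed to pin down $p$ is never written. So the equality case is asserted rather than derived.

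\textbf{Repairing the third route.} It works once made precise. For $p\in P_n$,
\[
zp'(z)=\frac n2\,p(z)+\frac1n\sum_{\nu=1}^{n}\frac{2\alpha_\nu}{(1-\alpha_\nu)^2}\,p(\alpha_\nu z),\qquad \alpha_\nu^n=-1,
\]
which you can verify on the monomials $z^k$, $0\le k\le n$.
\begin{itemize}
\item Writing $\alpha_\nu=e^{i\phi_\nu}$, each weight equals $-1/\bigl(2\sin^2(\phi_\nu/2)\bigr)<0$.
\item Taking $p\equiv1$ shows $\frac1n\sum_\nu 1/\bigl(2\sin^2(\phi_\nu/2)\bigr)=\frac n2$.
\item Hence $|zp'(z)|\le\frac n2|p(z)|+\frac n2\|p\|\le n\|p\|$ on $T_1$.
\item Equality at $z_0\in T_1$ forces $|p(z_0)|=\|p\|$ and $p(\alpha_\nu z_0)=-p(z_0)$ for every $\nu$.
\item Then $h(z)=p(z)-p(z_0)(z/z_0)^n\in P_n$ vanishes at the $n+1$ distinct points $z_0,\alpha_1z_0,\dots,\alpha_nz_0$. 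So $h\equiv0$ and $p=\lambda z^n$.
\end{itemize}

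\textbf{Repairing the second route.} This can also be done, but not via ``$q$ constant''.
\begin{itemize}
\item On $T_1$ one has $|q'(z)|=|np(z)-zp'(z)|$, so equality gives $z_0p'(z_0)=np(z_0)$.
\item Set $G(z)=p(z)/\bigl(p(z_0)(z/z_0)^n\bigr)$. Then $|G|\le1$ on $T_1\cup D_{1+}$, $G(z_0)=1$ and $G'(z_0)=0$.
\item Julia's boundary Schwarz lemma, applied to $G(1/\zeta)$, forces $G\equiv1$.
\end{itemize}
This route still requires an actual proof of $|p'|+|q'|\le n\|p\|$.
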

  For the class of polynomials having no zeros in $D_{1-},$ inequality \eqref{tA1} can be improved. In this direction,  Erd\"{o}s conjectured, and later Lax \cite{EL} proved the following result:
  \begin{thmx} \label{tB}
   If $p\in P_n$ having no zeros in the open unit disc $D_{1-},$ then
  \begin{align}\label{tB2}
  \|p^\prime\|\leq \frac{n}{2}\|p\|
  \end{align}
  with equality for those polynomials, which have all their zeros on $T_1.$
  \end{thmx}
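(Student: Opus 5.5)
The plan is to follow the classical route: pass to the reciprocal polynomial and use Bernstein's inequality (Theorem \ref{tA}) for an auxiliary polynomial. Let $p\in P_n$ have no zeros in $D_{1-}$, and put $q(z)=z^n\overline{p(1/\bar z)}$. Then $|q(z)|=|p(z)|$ on $T_1$. Differentiating, we get
\begin{align*}
q^\prime(z)=nz^{n-1}\overline{p(1/\bar z)}-z^{n-2}\overline{p^\prime(1/\bar z)}.
\end{align*}
Hence for $|z|=1$ we have the identity $|q^\prime(z)|=|np(z)-zp^\prime(z)|$.

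\textbf{Step 1.} Show that $|p^\prime(z)|\le |q^\prime(z)|$ for $z\in T_1$. If $p$ has a zero on $T_1$ we may perturb, so we first treat the case where all zeros $z_j$ satisfy $|z_j|\ge 1$. For $z\in T_1$ not a zero, each zero gives $\mathrm{Re}\,\frac{z}{z-z_j}\le \tfrac12$, because the map $w\mapsto 1/(1-w)$ sends the closed disc $|w|\le 1$ (minus $1$) onto the half-plane $\mathrm{Re}\le$... We will verify this directly: $|z_j|\ge|z|$ gives $\mathrm{Re}\,\frac{z}{z-z_j}\le\frac12$. Summing over the zeros,
\begin{align*}
\mathrm{Re}\,\frac{zp^\prime(z)}{p(z)}\le \frac n2 .
\end{align*}
This is exactly the statement $|zp^\prime(z)|\le |np(z)-zp^\prime(z)|$, that is, $|p^\prime(z)|\le|q^\prime(z)|$ on $T_1$. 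At zeros of $p$ on $T_1$ the inequality follows by continuity.

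\textbf{Step 2.} Combine the two derivatives. For any $\alpha$ with $|\alpha|=1$, the polynomial $p+\alpha q$ lies in $P_n$, so Theorem \ref{tA} gives $|p^\prime(z)+\alpha q^\prime(z)|\le n\max_{T_1}|p+\alpha q|\le 2n\|p\|$. Given $z\in T_1$, choose $\alpha$ so that $p^\prime(z)$ and $\alpha q^\prime(z)$ have the same argument. This yields $|p^\prime(z)|+|q^\prime(z)|\le 2n\|p\|$. Using Step 1 we then get $2|p^\prime(z)|\le 2n\|p\|$, which is \eqref{tB2}.

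\textbf{Equality.} If all zeros lie on $T_1$, then $q=\lambda p$ with $|\lambda|=1$, so $|p^\prime|=|q^\prime|$ on $T_1$. A direct check on $p(z)=z^n+1$ gives $\|p^\prime\|=n=\frac n2\|p\|$. For the general such $p$, the inequality of Step 1 becomes an identity, and the cleaner route is the sharper bound $|p^\prime|+|q^\prime|\le n\|p\|$. To obtain it, use the fact that $p^\prime+\alpha q^\prime$ has all zeros in the closed unit disc whenever $|\alpha|\ge1$; this is the Gauss–Lucas type step behind the refined estimate. I expect this equality refinement to be the main obstacle. For the inequality alone, Steps 1 and 2 as written suffice after correcting the constant: Step 2 should use $\|p+\alpha q\|\le\|p\|+\|q\|=2\|p\|$ together with the refined estimate $\max_{T_1}(|p^\prime|+|q^\prime|)\le n\|p\|$ rather than $2n\|p\|$. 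I would establish that refined estimate by applying Rouché's theorem to $p^\prime-\lambda\, n z^{n-1}\|p\|$ for $|\lambda|>1$.
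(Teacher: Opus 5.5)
Your Step 1 is correct. Step 2 as written, however, proves only Bernstein's inequality. Since $\|p+\alpha q\|$ can really be as large as $2\|p\|$, you get $|p'(z)|+|q'(z)|\le 2n\|p\|$, and together with $|p'|\le|q'|$ this gives $\|p'\|\le n\|p\|$, not \eqref{tB2}. You notice this and correctly identify the missing ingredient: $|p'(z)|+|q'(z)|\le n\|p\|$ on $T_1$. That estimate is the whole content of the theorem beyond Step 1. The paper does not prove Theorem B itself (it quotes Lax). But its proof of Theorem \ref{TE11} with $k=1$, $\beta=0$ rests on the analogue of your Step 1, the bound $\mathrm{Re}\,\frac{z}{z-z_j}\le\frac{1}{1+|z_j|}$, together with Lemma \ref{l3}, which is the rational form of this estimate and is imported from Li--Mohapatra--Rodriguez.

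Neither justification you offer for the estimate works. Rouch\'e applied to $p'-\lambda nz^{n-1}\|p\|$ only locates the zeros of that polynomial and says nothing about $q'$. The Gauss--Lucas statement about $p'+\alpha q'$ comes from the zero restriction on $p$ and yields $|p'|\le|q'|$ on $T_1$, which is Step 1 again; it contains no comparison with $\|p\|$.

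Rouch\'e has to be applied to $p$ itself. For $|\lambda|>1$, the polynomial $F(z)=p(z)-\lambda\|p\|z^n$ has all $n$ zeros in $D_{1-}$. Your Step 1 computation with the inequality reversed then gives $\mathrm{Re}\,\frac{zF'(z)}{F(z)}\ge\frac n2$, i.e.\ $|nF(z)-zF'(z)|\le|zF'(z)|$ on $T_1$. Since $nF-zF'=np-zp'$, this reads $|q'(z)|\le|p'(z)-n\lambda\|p\|z^{n-1}|$. Choose $\arg\lambda$ so that $\lambda z^{n-1}$ has the direction of $p'(z)$; by Theorem \ref{tA} the right side then equals $n|\lambda|\|p\|-|p'(z)|$. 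Letting $|\lambda|\to1$ gives $|p'(z)|+|q'(z)|\le n\|p\|$, and Step 1 then gives $2|p'(z)|\le n\|p\|$.

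The equality claim is not established either. An upper bound, refined or not, cannot show $\|p'\|=\frac n2\|p\|$, and the check for $z^n+1$ does not cover general $p$. You need the reverse inequality, which is easy here. If $\deg p=n$ and all $|z_j|=1$, then $\mathrm{Re}\,\frac{z}{z-z_j}=\frac12$ exactly, so $\mathrm{Re}\,\frac{zp'(z)}{p(z)}=\frac n2$ on $T_1$ away from the zeros. At a point $z_0\in T_1$ where $|p|$ attains $\|p\|$, the derivative $\frac{d}{d\theta}\log|p(e^{i\theta})|=-\mathrm{Im}\,\frac{zp'(z)}{p(z)}$ vanishes. Hence $z_0p'(z_0)=\frac n2p(z_0)$ and $|p'(z_0)|=\frac n2\|p\|$. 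The condition $\deg p=n$ matters: $p(z)=z+1$, regarded as an element of $P_2$, has $\|p'\|=1<2=\frac22\|p\|$.
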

  \noindent Malik \cite{MAM} considered the class of polynomials having no zeros in the circle of radius $k,$ and proved the following generalization of Theorem \ref{tB}. 
  \begin{thmx} \label{tC}
  If $p\in P_n$ having no zeros in the open unit disc $D_{k-},\ k \ge 1,$ then  
  \begin{align}\label{tC3}
  \|p^\prime\|\leq \frac{n}{1+k}\|p\|.
  \end{align}
  The result is best possible and equality holds for $p(z)=(z+k)^n.$
 \end{thmx}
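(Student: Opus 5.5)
The plan is to reduce the estimate for $p$ to one for the reversed polynomial, and then combine two facts on $T_1$: an identity relating $|p'|$ and $|q'|$, and a pointwise comparison between $|p'|$ and $|q'|$ that comes from the zero condition. Write $p(z)=c\prod_{j=1}^{n}(z-z_j)$ with $|z_j|\ge k\ge 1$, and put $q(z)=z^n\overline{p(1/\bar z)}$. We may assume $\deg p=n$; if $\deg p=m<n$, the argument below gives $\|p'\|\le \frac{m}{1+k}\|p\|\le\frac{n}{1+k}\|p\|$.

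\textbf{Step 1: the identity on $T_1$.} For $|z|=1$ we have $|q'(z)|=|np(z)-zp'(z)|$. From this, $|p'(z)|+|q'(z)|\ge n|p(z)|$. This alone is too weak, since it bounds $|p'|$ from below rather than above. What we need is the complementary inequality
\[
|p'(z)|+|q'(z)|\le n\|p\| \quad (z\in T_1),
\]
which is the standard fact underlying Lax's theorem. It follows by applying Theorem \ref{tA}, or more precisely its sharper form for $p'+\lambda q'$, to the polynomial $p(z)-\lambda q(z)$ with $|\lambda|>1$ and choosing the argument of $\lambda$ suitably. If I do not want to reprove this, I will simply quote it as known; it comes out of the proof of Theorem \ref{tB}.

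\textbf{Step 2: the key comparison.} The main step is to show $k|p'(z)|\le|q'(z)|$ on $T_1$. I expect this to be the main obstacle. For $|z|=1$,
\[
\mathrm{Re}\,\frac{zp'(z)}{p(z)}=\sum_j \mathrm{Re}\frac{z}{z-z_j}\le \frac{n}{1+k},
\]
since $|z_j|\ge k$ gives $\mathrm{Re}\,\frac{z}{z-z_j}\le\frac{1}{1+k}$. This one-term bound is an elementary disc computation: $w\mapsto w/(w-z_j)$ maps $T_1$ onto a circle whose real parts are at most $1/(1+|z_j|)$.

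To get a pointwise comparison I will not go through the logarithmic derivative alone. Instead, for $k\ge1$ I will apply the Gauss–Lucas-type argument to $p(kz)$: it has no zeros in $D_{1-}$, so $|k p'(kz)|\le |(\text{reverse})'|$ on $T_1$. From this one gets $k|p'(z)|\le |q'(z)|$ on $T_1$ via Malik's original argument. That argument uses $\max_{T_1}$ and the monotonicity of $|p(Rz)|$, which is the delicate part.

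\textbf{Step 3: combining and sharpness.} If the comparison is hard to establish, an alternative that avoids Step 2 is to use Step 1 together with the real-part bound: write $|q'|^2=|np-zp'|^2=n^2|p|^2-2n\,\mathrm{Re}(\bar p\, zp')+|p'|^2$, and use the bound on $\mathrm{Re}(zp'/p)$ from Step 2 to lower-bound $|q'|$ in terms of $|p'|$. Either way, once $(1+k)|p'(z)|\le |p'(z)|+|q'(z)|\le n\|p\|$ holds on $T_1$, taking the supremum gives $\|p'\|\le\frac{n}{1+k}\|p\|$. Finally, equality holds for $p(z)=(z+k)^n$: at $z=1$ we have $|p'(1)|=n(1+k)^{n-1}$ and $\|p\|=(1+k)^n$.
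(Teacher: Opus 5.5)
The paper does not prove this statement; it is quoted from Malik~\cite{MAM}, so your argument has to stand on its own. Its architecture is the standard one, and both ingredients you name are true: $|p'(z)|+|q'(z)|\le n\|p\|$ and $k|p'(z)|\le|q'(z)|$ on $T_1$. But Step 2, which you rightly call the main obstacle, is never actually established, and there is a genuine gap there. Applying the $k=1$ comparison to $P(z)=p(kz)$ gives $k|p'(\xi)|\le|np(\xi)-\xi p'(\xi)|$ for $|\xi|=k$. That is information on $T_k$, and no monotonicity of $|p(Rz)|$ carries it back to $T_1$; the obvious transfer via $\max_{T_k}|p|\le k^n\|p\|$ gives only $\|p'\|\le\frac{nk^{n-1}}{2}\|p\|$. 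Appealing to ``Malik's original argument'' is appealing to the result being proved. Your fallback in Step 3 cannot work either. The real-part bound only yields $|q'(z)|^2\ge|p'(z)|^2+\frac{k-1}{k+1}n^2|p(z)|^2$. This certifies $|q'(z)|\ge k|p'(z)|$ exactly at those points where $|p'(z)|\le\frac{n}{k+1}|p(z)|$, and that pointwise bound is false in general (take $p(z)=z-k$, $k>1$, at $z=1$). Combined with Step 1, this lower bound gives only the Aziz--Zargar type estimate $|p'(z)|\le\frac n2\bigl(1-\frac{k-1}{k+1}\frac{|p(z)|^2}{\|p\|^2}\bigr)\|p\|$, which reaches $\frac{n}{1+k}\|p\|$ only where $|p(z)|=\|p\|$.

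The missing idea is to use the full location of $u_j=z/(z-z_j)$, not just its real part. Put $\zeta=z_j/z$. Then $|\zeta|=|z_j|\ge k>1$ and $u_j=1/(1-\zeta)$. This M\"obius map sends $\{|\zeta|\ge k\}\cup\{\infty\}$ onto the closed disc $\bigl|u+\frac{1}{k^2-1}\bigr|\le\frac{k}{k^2-1}$, and your bound $\mathrm{Re}\,u_j\le\frac{1}{1+k}$ records only the rightmost point of this disc. Summing over $j$, $w=zp'(z)/p(z)$ lies in $\bigl|w+\frac{n}{k^2-1}\bigr|\le\frac{nk}{k^2-1}$. Expanding $|n-w|^2\ge k^2|w|^2$ shows that this disc is exactly $\{w:|n-w|\ge k|w|\}$. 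Since $|q'(z)|=|p(z)|\,|n-w|$ on $T_1$, this is precisely $|q'(z)|\ge k|p'(z)|$. For $k=1$ the disc degenerates to the half-plane $\mathrm{Re}\,w\le n/2$, where your real-part bound suffices, and zeros of $p$ on $T_1$ are handled by continuity. With this in hand, Step 1 finishes the proof as you describe. As a side remark, your sketch of Step 1 is off. The standard argument applies the $k=1$ comparison to $p(z)-\lambda\|p\|$ with $|\lambda|>1$, which has no zeros in the closed unit disc and whose reverse is $q(z)-\bar\lambda\|p\|z^n$. One then chooses $\arg\lambda$ using $|q'(z)|\le n\|p\|$ and lets $|\lambda|\to1$; the polynomial to use is not $p-\lambda q$. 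Simply quoting that result is fine.
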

\noindent Li, Mohapatra and Rodriguez \cite{LMR} extended Bernstein’s inequality \eqref{tA1} to the rational functions $r\in\Re_n$ with prescribed poles and replace $z^n$ by Blaschke product $B(z).$ Among other things they proved the following result:
  \begin{thmx}\label{tD}
   If $r\in\Re_n$, then for $z\in T_1,$
 \begin{align}\label{tD4}
  \left|r^{\prime}(z)\right| \leq |B^{\prime}(z)||r||.
  \end{align}
  Equality in \eqref{tD4} holds for $r(z)=u B(z)$ with $u\in T_1.$
    \end{thmx}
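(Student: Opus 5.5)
The idea is to reduce Theorem \ref{tD} to a statement about the Blaschke product by a Rouch\'e-type argument, the rational analogue of the classical proof of Bernstein's inequality. First we record the basic facts about $B$. Since every $|a_j|>1$, $B$ is analytic in $D_{1+}$ including $\infty$, where it behaves like $\prod(-\overline{a_j})\neq 0$. Moreover $|B|=1$ on $T_1$, and $|B(z)|<1$ for $z\in D_{1+}$, because each factor $(1-\overline{a_j}z)/(z-a_j)$ has modulus less than $1$ there. For $z\in T_1$ the logarithmic derivative gives
\begin{align*}
\frac{zB'(z)}{B(z)}=\sum_{j=1}^{n}\frac{|a_j|^2-1}{|z-a_j|^2}>0 ,
\end{align*}
so $|B'(z)|=\sum_j (|a_j|^2-1)/|z-a_j|^2$ on $T_1$. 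In particular, as $z=e^{i\theta}$ runs around $T_1$, $\arg B(z)$ is strictly increasing and its total increase is $2\pi n$.

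Now fix $r=p/w\in\Re_n$, normalize $\|r\|=1$, and take $\lambda$ with $|\lambda|>1$. The function $r-\lambda B$ has modulus at least $|\lambda|-1>0$ on $T_1$, so it has no zeros on $T_1$. Write $r-\lambda B=(p-\lambda w^*)/w$, where $w^*(z)=\prod(1-\overline{a_j}z)$ is a polynomial of degree $n$. On $T_1$ we have $|p|\le |w|=|w^*|<|\lambda w^*|$. By Rouch\'e's theorem, $p-\lambda w^*$ and $\lambda w^*$ have the same number of zeros in $D_{1-}$, namely zero, since the zeros of $w^*$ are $1/\overline{a_j}\in D_{1-}$. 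The polynomial $p-\lambda w^*$ has degree exactly $n$, because its leading coefficient $p_n-\lambda\prod(-\overline{a_j})$ is nonzero: $|p_n|=|p(\infty)\cdot|\le \prod|a_j|$ by the maximum principle on $D_{1+}$ applied to $r$. Hence all $n$ zeros of $p-\lambda w^*$ lie in $D_{1+}$.

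Next we use the zero location to compare derivatives. Write $q=p-\lambda w^*=c\prod(z-b_k)$ with $|b_k|>1$. For $z\in T_1$,
\begin{align*}
\mathrm{Re}\,\frac{zq'(z)}{q(z)}=\sum_{k}\mathrm{Re}\,\frac{z}{z-b_k}<\frac n2 ,
\end{align*}
and a similar computation for $w$ gives $\mathrm{Re}\,\bigl(zw'(z)/w(z)\bigr)<n/2$. This is where I expect the difficulty to lie: the real-part bounds alone do not directly compare $|r'|$ with $|B'|$. The cleaner route is to argue by contradiction via argument counting. Suppose that at some $z_0\in T_1$ we had $|r'(z_0)|>|B'(z_0)|$. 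Choose $\lambda=r'(z_0)/B'(z_0)$, so that $|\lambda|>1$ and $r'(z_0)-\lambda B'(z_0)=0$.

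Then consider $F=r-\lambda B$ on $T_1$ together with the function $\phi(\theta)=F(e^{i\theta})/(\lambda B(e^{i\theta}))=r/(\lambda B)-1$. Since $|r/(\lambda B)|<1$ on $T_1$, the point $\phi(\theta)$ stays in the disc $|\zeta+1|<1$, and so $F/B$ has winding number $0$. Equivalently, $\arg F$ increases by exactly $2\pi n$, the same as $\arg B$. The key estimate I would try is the tangency fact
\begin{align*}
\frac{d}{d\theta}\arg F(e^{i\theta})=\mathrm{Re}\,\frac{zF'(z)}{F(z)}\cdot(\text{sign}) ,
\end{align*}
combined with the Rouch\'e information above. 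The aim is to show that $\mathrm{Re}\,(zF'/F)$ stays strictly positive wherever it is relevant, which is incompatible with $F'(z_0)=0$.

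Concretely, $F=q/w$, so
\begin{align*}
\frac{zF'}{F}=\sum_k\frac{z}{z-b_k}-\sum_j\frac{z}{z-a_j}.
\end{align*}
The term for a pole $a_j$ contributes $\mathrm{Re}\,\bigl(-z/(z-a_j)\bigr)$, which exceeds $-\tfrac12$. This alone does not finish the argument, so in the end I would switch to the more robust classical device, following Li--Mohapatra--Rodriguez: a rational function with all zeros in $D_{1+}$ and poles at the $a_j$ can satisfy $F'(z_0)=0$ on $T_1$ only in a degenerate case. One then derives $|r'(z)|\le|B'(z)|$, and the equality case $r=uB$ follows by checking directly that $|(uB)'|=|B'|$ on $T_1$.
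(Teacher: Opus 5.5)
The paper gives no proof of this statement: it is quoted from Li--Mohapatra--Rodriguez, and it also follows at once from Lemma~\ref{l3}, which is likewise quoted. Your strategy is the standard one: assume $|r'(z_0)|>|B'(z_0)|$, put $\lambda=r'(z_0)/B'(z_0)$, and show that $F=r-\lambda B$ cannot have a critical point on $T_1$. As written, however, the argument has a genuine gap caused by a sign error in the Rouch\'e step.

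The zeros $1/\overline{a_j}$ of $w^*$ lie in $D_{1-}$, so $\lambda w^*$ has $n$ zeros in $D_{1-}$, not zero. Rouch\'e therefore gives that $q=p-\lambda w^*$ has all $n$ of its zeros in $D_{1-}$, the opposite of what you concluded. Your own winding computation says the same thing: $\arg F$ increases by $2\pi n$ on $T_1$ while every pole of $F$ lies in $D_{1+}$, so $F$ has $n$ zeros in $D_{1-}$.

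With the zeros misplaced in $D_{1+}$, the real-part bounds have no definite sign. You then close by invoking an unproved ``classical device'': that a rational function with zeros in $D_{1+}$ and poles at the $a_j$ has a critical point on $T_1$ only in degenerate cases. That assertion is false. For example, $F(z)=\bigl((z-\tfrac{7}{5})^2+\tfrac{16}{25}\bigr)/(z-3)^2$ has both zeros of modulus $\sqrt{65}/5>1$, $F(1)=\tfrac15\neq 0$, and $F'(1)=0$. So the inequality is not established.

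The repair is short once the zeros are in the right place. Write $q=c\prod_k(z-b_k)$ with $|b_k|<1$; here $\deg q=n$ automatically, so the leading-coefficient discussion is unnecessary. For $z\in T_1$ one has $\mathrm{Re}\,\frac{z}{z-b_k}=\frac12+\frac{1-|b_k|^2}{2|z-b_k|^2}>\frac12$, while Lemma~\ref{l1} gives $\mathrm{Re}\,\frac{zw'(z)}{w(z)}=\frac{n-|B'(z)|}{2}$. Hence
\[
\mathrm{Re}\,\frac{zF'(z)}{F(z)}=\mathrm{Re}\,\frac{zq'(z)}{q(z)}-\mathrm{Re}\,\frac{zw'(z)}{w(z)}>\frac{|B'(z)|}{2}>0,\qquad z\in T_1 .
\]
This contradicts $F'(z_0)=r'(z_0)-\lambda B'(z_0)=0$. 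Equivalently, you can apply Lemma~\ref{l5} with $k=1$ to $F$.

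The same inside/outside confusion affects your preliminary facts. $B$ and $r$ have poles at the $a_j\in D_{1+}$, so neither is analytic there; in fact $|B|>1$ on $D_{1+}$ and $|B|<1$ on $D_{1-}$. Your bound $|p_n|\le\prod_j|a_j|$ happens to be true, but it comes from the maximum principle for $r/B=p/w^*$ on $D_{1+}\cup\{\infty\}$, not for $r$.
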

  \noindent For rational functions in $\Re_n$ with restricted zeros, they \cite{LMR} also proved the following result:
  \begin{thmx}\label{tE}
  Let $r\in \Re_{n}$ have all its zeros in $T_1 \cup D_{1+}$, then for $z\in T_1$, we have 
  \begin{align}\label{tE5}
  \left|r^{\prime}(z)\right| \leq \frac{1}{2}\left|B^{\prime}(z)\right| \|r\|.
  \end{align}
  Equality in \eqref{tE5} holds for $r(z)=u B(z) + v $ with $u,v \in T_1.$
  \end{thmx}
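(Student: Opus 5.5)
The plan is to prove Theorem \ref{tE} by the Lax-type reflection argument, transplanted to $\Re_n$ with $B(z)$ taking the place of $z^n$. For $r=p/w\in\Re_n$ introduce the conjugate rational function
\begin{align*}
r^*(z):=B(z)\,\overline{r\left(1/\overline{z}\right)}=\frac{p^*(z)}{w(z)},\qquad p^*(z)=z^n\overline{p\left(1/\overline{z}\right)}.
\end{align*}
So $r^*\in\Re_n$, and $|r^*(z)|=|r(z)|$ on $T_1$ because $|B|=1$ there. The argument needs three ingredients on $T_1$:
\begin{align*}
\text{(a)}\ \ |r'(z)|\le|r^{*\prime}(z)|,\qquad \text{(b)}\ \ |r'(z)|+|r^{*\prime}(z)|\le|B'(z)|\,\|r\|,
\end{align*}
and (c) the combination of the two. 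Indeed, (a) and (b) together give $2|r'(z)|\le|B'(z)|\,\|r\|$, which is \eqref{tE5}.

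For (a), use the zero hypothesis. All zeros of $p$ lie in $T_1\cup D_{1+}$, so the zeros of $p^*$ lie in $D_{1-}\cup T_1$. For $z=e^{i\theta}$, compute the logarithmic derivative:
\begin{align*}
\frac{zr'(z)}{r(z)}=\sum_{\nu}\frac{z}{z-z_\nu}-\sum_{j}\frac{z}{z-a_j}.
\end{align*}
For $|z_\nu|\ge1$ we have $\mathrm{Re}\,\frac{z}{z-z_\nu}\le\frac12$, with the opposite inequality for the reflected zeros $1/\overline{z_\nu}$. The pole terms are identical in $r$ and $r^*$. Differentiating the identity $r^*=B\,\overline{r(1/\bar z)}$ on $T_1$ gives the standard relation
\begin{align*}
\overline{z r^{*\prime}(z)/r^*(z)}=\frac{zB'(z)}{B(z)}-\frac{zr'(z)}{r(z)}.
\end{align*}
Here $zB'(z)/B(z)=|B'(z)|>0$ is real, namely $\sum_j\frac{|a_j|^2-1}{|z-a_j|^2}$. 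It follows that $\mathrm{Re}\,(zr'/r)\le\frac12|B'|$ while the imaginary parts agree up to sign. This yields $|zr'/r|\le|zr^{*\prime}/r^*|$ at non-zeros, hence $|r'|\le|r^{*\prime}|$. Proving $\mathrm{Re}(zr'/r)\le\frac12|B'(z)|$ directly from the zero restriction, by pairing each zero with the poles, is the step I expect to be the main obstacle. I would first try to reduce it to the fact that $r$ has no zeros in $D_{1-}$, so that $r^*/r$ is analytic and bounded by $1$ there.

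For (b), apply Theorem \ref{tD} in its sharpened form to $r-\lambda\|r\|B$ with $|\lambda|>1$. By Rouch\'e on the disc (after multiplying through by $w$), the function $\lambda\|r\|B - r$ has all $n$ zeros in $D_{1-}$. This is the Blaschke analogue of the polynomial argument and gives $|r^{*\prime}(z)|\le|\lambda|\|r\||B'(z)|-|r'(z)|$, after the usual choice of $\arg\lambda$. Letting $|\lambda|\to1$ yields (b).

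For equality, check $r=uB+v$ directly: then $r^*=\bar v B+\bar u$, $|r'|=|B'|$ on $T_1$, and $\|r\|=2$.
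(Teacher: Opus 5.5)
Your proof is correct and is essentially the argument the paper gives for Theorem~\ref{TE11}. The paper itself only quotes Theorem~\ref{tE} from Li--Mohapatra--Rodriguez, but the case $k=1$, $\beta=0$ of that proof, where $\sum_j \frac{1}{1+|z_j|}\le\frac n2$, is exactly your scheme: a bound $\mathrm{Re}\,(zr'(z)/r(z))\le\frac12|B'(z)|$ combined with Lemma~\ref{l2} gives $|r'(z)|\le|(r^*)'(z)|$, and Lemma~\ref{l3}, which is your (b), finishes.

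Two adjustments:

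\emph{The real-part bound.} The step you flag as the main obstacle is already settled by your own comparison. The pole terms cancel, and the zero terms have real part $\le\frac12$ for $r$ and $\ge\frac12$ for $r^*$; the latter includes the $n-\deg p$ zeros of $p^*$ at the origin, each contributing real part $1$. The paper instead gets the bound directly from Lemma~\ref{l1}.

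\emph{Your derivation of (b).} The inequality $|(r^*)'(z)|\le|r'(z)-\lambda\|r\|B'(z)|$ does not come from Theorem~\ref{tD}; it comes from your step (a) applied to $F^*=r^*-\bar\lambda\|r\|$. This function has all its zeros in $D_{1+}$ and satisfies $F^{**}=F$. Theorem~\ref{tD} is needed only to fix $\arg\lambda$. The paper does not prove (b) at all; it simply cites it as Lemma~\ref{l3}.
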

  \noindent As a generalization of Theorem \ref{tE}, Aziz and Zargar \cite{AZ} obtained the following result:
 \begin{thmx}\label{tF}
 Let $r\in \Re_{n}$ have all its zeros in $T_k \cup D_{k+}$, $k\geq1$, then for $z\in T_1$, we have
 \begin{align}\label{tF6}
 \left|r^{\prime}(z)\right| \leq \frac{1}{2}\left\lbrace \left|B^{\prime}(z)\right|- \frac{n(k-1)}{(k+1)} - \frac{|r(z)|^{2}}{\|r\| ^2}\right\rbrace  \|r\|.
 \end{align}
 Equality in \eqref{tF6} occurs at $z=1$ for 
 \begin{align*}
 r(z) = \left(\frac{z+k}{z-a}\right)^n ~,   \quad  B(z) = \left(\frac{1 - az}{z - a}\right)^n, \quad  a>1 ,~ k\ge 1.
 \end{align*}
 \end{thmx}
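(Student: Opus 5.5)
The plan is to combine a lower bound for $\operatorname{Re}\bigl(zr'(z)/r(z)\bigr)$ on $T_1$, coming from the location of the zeros and poles, with the Li--Mohapatra--Rodriguez inequality $|r'(z)|+|r^{*\prime}(z)|\le |B'(z)|\,\|r\|$ for the conjugate function $r^*(z):=B(z)\overline{r(1/\bar z)}\in\Re_n$. This is the known extremal-function fact underlying Theorem \ref{tD}, and I would cite it from \cite{LMR}.

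\emph{Step 1 (the real-part estimate).} For $|z|=1$ and any $a$ we have $|z-a|^2=1-2\operatorname{Re}(\bar a z)+|a|^2$, which gives $2\operatorname{Re}\frac{z}{z-a}=1-\frac{|a|^2-1}{|z-a|^2}$. Summing over the poles, and using $|B'(z)|=\sum_j\frac{|a_j|^2-1}{|z-a_j|^2}$ on $T_1$, yields
\begin{align*}
2\operatorname{Re}\frac{zw'(z)}{w(z)}=n-|B'(z)|.
\end{align*}
For the zeros $b_\nu$ of $p$, which satisfy $|b_\nu|\ge k$, the elementary bound $\operatorname{Re}\frac{z}{z-b}\le\frac1{1+k}$ holds for $|z|=1$. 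If $\deg p=m<n$, the missing $n-m$ terms are simply absent and contribute $0\le\frac1{1+k}$. Since $r=p/w$, we get at every $z\in T_1$ with $r(z)\ne0$
\begin{align*}
\operatorname{Re}\frac{zr'(z)}{r(z)}\le \frac{n}{1+k}-\frac n2+\frac{|B'(z)|}{2}=\frac{|B'(z)|}{2}-\frac{n(k-1)}{2(k+1)}=:A.
\end{align*}

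\emph{Step 2 (the identity for $r^*$).} Differentiating $r^*(z)=B(z)\overline{r(1/\bar z)}$ and using $zB'(z)/B(z)=|B'(z)|$ on $T_1$, one checks that $|r^{*\prime}(z)|=\bigl|\,|B'(z)|r(z)-zr'(z)\bigr|$ for $z\in T_1$. Write $u=zr'(z)$, $\rho=r(z)$, $b=|B'(z)|$ and $M=\|r\|$. Step 1 gives $\operatorname{Re}(u\bar\rho)\le A|\rho|^2$, so
\begin{align*}
|r^{*\prime}(z)|^2=|u|^2-2b\operatorname{Re}(u\bar\rho)+b^2|\rho|^2\ge |u|^2+b(b-2A)|\rho|^2 .
\end{align*}

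\emph{Step 3 (combine).} From $|u|+|r^{*\prime}(z)|\le bM$ we have $(bM-|u|)^2\ge |u|^2+b(b-2A)|\rho|^2$. The $|u|^2$ terms cancel, and dividing by $2bM$ (the case $b=0$ is trivial) gives
\begin{align*}
|r'(z)|\le \frac12\Bigl\{|B'(z)|-\frac{n(k-1)}{k+1}\,\frac{|r(z)|^2}{\|r\|^2}\Bigr\}\|r\|,
\end{align*}
since $b-2A=n(k-1)/(k+1)$. This is the Aziz--Zargar refinement. At the points where $|r(z)|=\|r\|$ it reads exactly $\frac12\{|B'(z)|-\frac{n(k-1)}{k+1}\}\|r\|$. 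Equality is checked directly for $r=((z+k)/(z-a))^n$ at $z=1$.

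I expect the main obstacle to be the precise form of the statement. Read literally, \eqref{tF6} subtracts $|r(z)|^2/\|r\|^2$ additively. That reading cannot hold: for $k=1$, $r=B+1$ at a point where $B(z)=1$, it would force $|B'(z)|\le |B'(z)|-1$, and this contradicts the stated equality case of Theorem \ref{tE}. So I would prove the multiplicative form above, which I take to be the intended meaning. The only nontrivial ingredients are the sharp inequality $|r'|+|r^{*\prime}|\le|B'|\,\|r\|$ and the identity in Step 2. I would verify that identity carefully, because the sign conventions in $zB'/B=|B'|$ on $T_1$ are where errors typically creep in.
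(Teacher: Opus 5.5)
Your proof is correct. It is essentially the paper's own argument for Theorem \ref{TE11} specialized to $\beta=0$: Lemma \ref{l1} together with the bound $\operatorname{Re}\frac{z}{z-z_j}\le\frac{1}{1+|z_j|}$, then the identity $|(r^*)'(z)|=\bigl||B'(z)|r(z)-zr'(z)\bigr|$ via Lemma \ref{l2}, then Lemma \ref{l3} and squaring. You additionally bound $\frac{1}{1+|z_j|}\le\frac{1}{1+k}$, and the Aziz--Shah Lemma \ref{l5} is only needed in the paper to absorb the $\beta$-term.

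You are also right that \eqref{tF6} as printed contains a misprint. The factor $|r(z)|^2/\|r\|^2$ belongs on $\frac{n(k-1)}{k+1}$: your counterexample $r=B+1$ shows this, and the stated extremal function $((z+k)/(z-a))^n$ at $z=1$ attains equality only in the multiplicative form. That multiplicative form, which you prove, is the intended Aziz--Zargar inequality.
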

 Numerous inequalities between rational functions in both directions form an essential part of the classical content of geometric function theory. These inequalities serve as generalizations of the polynomial inequalities and play a crucial role in the field of approximation theory. Many recent articles have been published on this topic, demonstrating its importance (e.g., see \cite{NAI},\cite{MSW},\cite{MSW1},\cite{MSW2}).
\section{\textbf{Main results}}
 In this section, we  first arrive at certain generalizations and refinements of Theorems \ref{tE} and \ref{tF}. Then as an application of these results, we obtain some new and elegant generalizations and refinements of the known polynomial inequalities due to Erd\"{o}s-Lax \cite{EL} and Malik \cite{MAM}. We begin by presenting the following generalization and refinement of Theorem \ref{tF}.
 \begin{theorem}\label{TE11}   
Suppose $r\in \Re_{n}$ has all its zeros in $T_k \cup D_{k+}$, $k\geq1;$ that is $r(z)= p(z)/w(z)$ with $p(z)=c_n\prod\limits_{j=1}^{n}({z -z_j}),$ $c_n\ne0 ,  |z_j|\ge k\ge 1$. Then for all $z\in T_1$, other than the zeros of $r(z)$ and $|\beta|\leq1,$
\begin{align}\label{TE11e}
\left|\frac{zr^\prime(z)}{r(z)}+\frac{\beta}{1+k}|B^\prime(z)|\right|\leq \frac{1}{2} \left\lbrace|B^\prime(z)|- \frac{n|r(z)|^{2}}{\|r\|^{2}}\frac{(k-1)}{(k+1)}- \frac{2|r(z)|^{2}}{\|r\|^{2}}\left(\frac{n}{k+1} - \sum\limits_{j=1}^{n} \frac{1}{1+|z_{j}|} - \frac{Re(\beta)}{(1+k)}|B^{\prime}(z)| \right)\right\rbrace\frac{\|r\|}{|r(z)|}.
 \end{align}
 Inequality \ref{TE11e} is sharp in the case $\beta=0$ and equality holds for
 \begin{align*}
 r(z) = \left(\frac{z+k}{z-a}\right)^n ~,   \quad  B(z) = \left(\frac{1 - az}{z - a}\right)^n, \quad at \quad z=1, \quad  a>1, \quad k\ge1 \quad and \quad \beta=0.
 \end{align*}
 \end{theorem}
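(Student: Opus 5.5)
The plan is to reduce everything to the logarithmic derivative $w:=zr^\prime(z)/r(z)$ at a fixed $z\in T_1$ with $r(z)\neq0$. Write $t:=|B^\prime(z)|$, $M:=\|r\|/|r(z)|\ge1$ and $S:=\sum_{j=1}^n \frac{1}{1+|z_j|}$. There are two ingredients, an upper bound for $\operatorname{Re} w$ and an upper bound for $|w|$ in terms of $\operatorname{Re} w$, which I then combine.

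\emph{Step 1 (real part).} Since $w=\sum_j \frac{z}{z-z_j}-\sum_j\frac{z}{z-a_j}$, I use the elementary identity valid for $|z|=1$:
\[
\operatorname{Re}\frac{z}{z-a}=\frac{|z-a|^2+1-|a|^2}{2|z-a|^2}.
\]
Together with $|B^\prime(z)|=\sum_j\frac{|a_j|^2-1}{|z-a_j|^2}$ on $T_1$, it gives $\operatorname{Re}\sum_j\frac{z}{z-a_j}=\frac{n-t}{2}$. For each zero, $|z_j|\ge k\ge1$ gives $\operatorname{Re}\frac{z}{z-z_j}\le\frac{1}{1+|z_j|}$, since the disc image of $T_1$ under $z\mapsto z/(z-z_j)$ has its rightmost point at $1/(1+|z_j|)$. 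Hence
\[
x:=\operatorname{Re} w\le S-\frac n2+\frac t2 .
\]

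\emph{Step 2 (modulus).} I introduce $r^*(z)=B(z)\overline{r(1/\bar z)}\in\Re_n$. A direct computation on $T_1$, using $zB^\prime(z)/B(z)=t$, gives
\[
|r^{*\prime}(z)|=|r(z)|\,|t-\bar w|=|r(z)|\,|t-w|.
\]
I then invoke the standard companion of Theorem \ref{tD} from \cite{LMR}, namely $|r^\prime(z)|+|r^{*\prime}(z)|\le |B^\prime(z)|\,\|r\|$ on $T_1$. Dividing by $|r(z)|$ and writing $A=|w|$, $C=|t-w|$, this reads $A+C\le tM$. Since $A^2-C^2=2tx-t^2$, I get
\[
C^2=A^2-2tx+t^2\le (tM-A)^2,
\]
and solving for $A$ yields
\[
A\le \frac{tM}{2}+\frac{2x-t}{2M}.
\]
Inserting Step 1, so that $2x-t\le 2S-n$, gives
\[
|w|\le \frac12\Big\{tM-\frac{n-2S}{M}\Big\}.
\]
Because $n-2S=\frac{n(k-1)}{k+1}+2\big(\frac{n}{k+1}-S\big)$, this is exactly \eqref{TE11e} for $\beta=0$. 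Equality is checked directly for $r=((z+k)/(z-a))^n$ at $z=1$: there all zeros lie on $T_k$ at the extremal point, $|r(1)|=\|r\|$, and both Steps 1 and 2 are tight.

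\emph{Step 3 (the parameter $\beta$): the main obstacle.} For $\gamma=\beta t/(1+k)$ the claimed right-hand side equals the $\beta=0$ bound plus $\frac{\operatorname{Re}(\beta)\,t}{(1+k)M}=\frac{\operatorname{Re}\gamma}{M}$. I would first try to rerun Step 2 with $w$ replaced by $w+\gamma$, i.e.\ bound $|w+\gamma|^2=A^2+2\operatorname{Re}(\bar\gamma w)+|\gamma|^2$. For this I would use Step 1 to control $\operatorname{Re}(\bar\gamma w)$ and the constraint $A+C\le tM$ to control the rest.

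I expect this step to be delicate. For $M>1$ and non-real $\beta$, the required inequality $|w+\gamma|\le(\text{bound on }|w|)+\operatorname{Re}\gamma/M$ is stronger than the triangle inequality. So either a sharper form of the lemma in Step 2 is needed, for instance one that keeps track of $\operatorname{Im} w$, or the statement must be restricted, for example to real $\beta\ge0$ or $M=1$. If the direct route fails, my fallback is to apply the Step 2 argument to an auxiliary function, namely $r$ multiplied by a suitable power of $B$ chosen to shift $w$ by a multiple of $t$. I would then read off the $\beta$-term from the resulting modified version of Step 1.
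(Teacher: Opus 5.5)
Your Steps 1 and 2 are correct. They are precisely the paper's argument with $\beta=0$: Lemma \ref{l1}, the bound $\mathrm{Re}\,\frac{z}{z-z_j}\le\frac{1}{1+|z_j|}$, the identity $|(r^*)'(z)|=|r(z)|\,|t-w|$, Lemma \ref{l3}, and squaring. Your equality check is also right.

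The gap is Step 3, and it cannot be closed, because \eqref{TE11e} is false for general $|\beta|\le 1$. Take the paper's own extremal function $r(z)=((z+k)/(z-a))^n$ at $z=1$. There $M=1$ and $w=\frac{n}{1+k}+\frac{n}{a-1}$ is real. The right-hand side equals $w+\frac{\mathrm{Re}(\beta)\,t}{1+k}$, so for $\beta=i$ it is just $w$, while the left-hand side is $\sqrt{w^2+t^2/(1+k)^2}>w$. Every non-real $\beta$ fails at this point, so your suggested restriction ``$M=1$'' does not rescue the statement.

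Your other suggested restriction, real $\beta\ge 0$, fails too. Take $n=k=1$, $r(z)=(z+1)/(z-10)$, $z=i$. Then $w=\frac{99+121i}{202}$, $t=\frac{99}{101}$, $M=\frac{\sqrt{202}}{9}$ and $S=\frac12$. For $\beta=1$ the left side is $|198+121i|/202\approx 1.149$, but the right side is $\frac t2(M+M^{-1})\approx 1.084$. For $\beta=-1$ the comparison is $\approx 0.599$ against $\frac t2(M-M^{-1})\approx 0.464$.

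The paper handles the $\beta$-term the way you first propose. It expands $|t-(w+\gamma)|^2\ge |w+\gamma|^2-2t\big(S-\frac n2+\mathrm{Re}\,\gamma\big)$. It then uses Lemma \ref{l5} for $r^*$ to get $|t-w|\ge \frac{t}{1+k}\ge|\gamma|$, and ``chooses the argument of $\beta$'' so that $|t-\bar w-\bar\gamma|=|t-w|-|\gamma|$. Lemma \ref{l3} and $|w|+|\gamma|\ge|w+\gamma|$ then finish. But that choice forces $\arg\beta=\arg\big(t-w(z)\big)$, a direction depending on $z$ and $r$. So the argument proves the bound only for $\beta=0$ and for $\beta$ along that one ray, not for all $|\beta|\le 1$. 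The counterexamples above all use $\beta$ off that ray.

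Your fallback via $rB^m$ would not help either. It leaves $\Re_n$, and it only shifts $w$ by real multiples of $t$. It also introduces zeros at $1/\bar a_j\in D_{1-}$, which destroys Step 1.

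What you have actually proved is the theorem for $\beta=0$, which is its only sharp case. The $\beta$-version as stated should not be claimed.
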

 \begin{remark}
 \textnormal{In the case $k=1,$ Theorem \ref{TE11} immediately yields to the following generalization and refinement of Theorem \ref{tE}.}
 \end{remark}
  \begin{corollary}\label{t1}
  Suppose $r\in \Re_{n}$ has all its zeros in $T_1 \cup D_{1+};$ that is $r(z)= p(z)/w(z)$ with $p(z)=c_n\prod\limits_{j=1}^{n}({z -z_j}),$ $c_n\ne0 ,  |z_j|\ge 1$. Then for all $z\in T_1$, other than the zeros of $r(z)$ and $|\beta|\leq1,$
\begin{align}\label{t1e}
\left|\frac{zr^\prime(z)}{r(z)} +\frac{\beta}{2}|B^\prime(z)|\right|\leq \frac{1}{2} \left\lbrace|B^\prime(z)|- \frac{2|r(z)|^{2}}{\|r\|^{2}}\left(\frac{n}{2} - \sum_{j=1}^{n} \frac{1}{1+|z_{j}|} - \frac{Re(\beta)}{2}|B^{\prime}(z)| \right)\right\rbrace\frac{\|r\|}{|r(z)|}.
 \end{align}
 Inequality \ref{t1e} is sharp in the case $\beta=0$ and equality holds for
 \begin{align*}
 r(z) = \left(\frac{z+1}{z-a}\right)^n ~,   \quad  B(z) = \left(\frac{1 - az}{z - a}\right)^n, \quad at \quad z=1, \quad  a>1 \quad and \quad \beta=0.
 \end{align*}
 \end{corollary}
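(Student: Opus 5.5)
Corollary \ref{t1} is the special case $k=1$ of Theorem \ref{TE11}, so I would prove it by specializing that theorem rather than re-deriving anything.

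\textbf{Specializing the hypotheses and the inequality.} If $r\in\Re_n$ has all its zeros in $T_1\cup D_{1+}$, then $r=p/w$ with $p(z)=c_n\prod_{j=1}^n(z-z_j)$ and $|z_j|\ge 1$. This is exactly the hypothesis of Theorem \ref{TE11} with $k=1$. Applying \eqref{TE11e} with $k=1$, for $z\in T_1$ not a zero of $r$ and $|\beta|\le 1$, changes three things:
\begin{itemize}
\item the coefficient $\beta/(1+k)$ on the left becomes $\beta/2$;
\item the term $\frac{n|r(z)|^2}{\|r\|^2}\frac{(k-1)}{(k+1)}$ vanishes;
\item inside the last bracket, $\frac{n}{k+1}$ becomes $\frac{n}{2}$ and $\frac{Re(\beta)}{1+k}$ becomes $\frac{Re(\beta)}{2}$.
\end{itemize}
The result is precisely \eqref{t1e}.

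\textbf{Sharpness.} Take $\beta=0$, $r(z)=\left(\frac{z+1}{z-a}\right)^n$ and $B(z)=\left(\frac{1-az}{z-a}\right)^n$ with $a>1$. This is the extremal function of Theorem \ref{TE11} with $k=1$, but I would also verify it directly.

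On $T_1$ we have $|z+1|\le 2$ and $|z-a|\ge a-1$, with both extremes attained at $z=1$. Hence
\[
\|r\|=|r(1)|=\frac{2^n}{(a-1)^n}.
\]
All zeros of $r$ equal $-1$, so $\sum_{j=1}^n\frac{1}{1+|z_j|}=\frac n2$, and the bracket on the right of \eqref{t1e} vanishes. A direct computation gives
\[
|B'(1)|=\frac{n(a+1)}{a-1},
\]
so the right-hand side equals $\frac{n(a+1)}{2(a-1)}$.

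For the left-hand side,
\[
\frac{zr'(z)}{r(z)}=nz\left(\frac{1}{z+1}-\frac{1}{z-a}\right),
\]
which at $z=1$ equals $n\left(\frac12+\frac1{a-1}\right)=\frac{n(a+1)}{2(a-1)}$. The two sides agree, so equality holds.

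There is no real obstacle here. The only point needing care is that every $k$-dependent term of \eqref{TE11e} collapses correctly at $k=1$, and the equality check above confirms this.
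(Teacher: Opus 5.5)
\textbf{Verdict: the reduction is correct and is the paper's own route, but the statement is false for $\beta\neq 0$, so it only proves the case $\beta=0$.}

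The paper presents Corollary~\ref{t1} as the case $k=1$ of Theorem~\ref{TE11}, exactly as you do, and your equality check at $z=1$, $\beta=0$ is correct. The problem is your claim that there is no real obstacle. For $\beta\neq 0$ the inequality \eqref{TE11e} you specialize is false, so specializing it just carries the error over.

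\textbf{Counterexample.} Take $n=1$, $r(z)=\frac{z+1}{z-a}$ with $a>1$, and real $\beta$ with $0<|\beta|\le 1$.
\begin{itemize}
\item On all of $T_1$ we have $|r'(z)|=\frac{a+1}{|z-a|^{2}}=\frac12|B'(z)|\,\|r\|$.
\item Since $\mathrm{Re}\frac{z}{z+1}=\frac12$ on $T_1$, Lemma~\ref{l1} gives $\mathrm{Re}\frac{zr'(z)}{r(z)}=\frac{|B'(z)|}{2}$.
\item Put $t=|r(z)|/\|r\|$. Then $\frac{zr'(z)}{r(z)}=\frac{|B'(z)|}{2}(1+is)$ with $1+s^{2}=t^{-2}$.
\item Because $\sum_j\frac{1}{1+|z_j|}=\frac12$, the right side of \eqref{t1e} equals $\frac{|B'(z)|}{2}\bigl(\frac1t+\beta t\bigr)\ge 0$.
\end{itemize}
Therefore
\[
\left|\frac{zr'(z)}{r(z)}+\frac{\beta}{2}|B'(z)|\right|^{2}-\left(\frac{|B'(z)|}{2}\Bigl(\frac1t+\beta t\Bigr)\right)^{2}=\frac{|B'(z)|^{2}}{4}\,\beta^{2}\,(1-t^{2})>0
\]
at every $z\in T_1\setminus\{\pm1\}$. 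For instance, $a=10$, $z=i$, $\beta=1$ give left side $11\sqrt{445}/202\approx 1.149$ and right side $3113/(202\sqrt{202})\approx 1.084$. At $z=1$ we have $t=1$, so a check there cannot detect the failure even with $\beta\neq0$.

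\textbf{Where the paper's proof breaks.} The error is in the proof of Theorem~\ref{TE11}, at the step where ``the argument of $\beta$'' is chosen so that the right side of \eqref{18} becomes $\bigl|\frac{(r^{*})'(z)}{r(z)}\bigr|-|\beta|\frac{|B'(z)|}{1+k}$.
\begin{itemize}
\item $\beta$ is given in advance and also appears on the left of \eqref{18}, so it cannot be re-chosen.
\item For a general $\beta$, the triangle inequality only bounds that right side by $\bigl|\frac{(r^{*})'(z)}{r(z)}\bigr|+|\beta|\frac{|B'(z)|}{1+k}$, which has the wrong sign.
\end{itemize}

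\textbf{What survives.} For $\beta=0$ that step is vacuous. The rest of the argument (Lemma~\ref{l1}, the bound $\mathrm{Re}\frac{z}{z-z_j}\le\frac{1}{1+|z_j|}$, Lemmas~\ref{l2} and~\ref{l3}) is sound. So your specialization genuinely proves only the $\beta=0$ case,
\[
|r'(z)|\le\frac{|B'(z)|}{2}\|r\|-\Bigl(\frac n2-\sum_{j=1}^{n}\frac{1}{1+|z_j|}\Bigr)\frac{|r(z)|^{2}}{\|r\|},
\]
together with the sharpness you verified. The version with $\beta\neq0$ cannot be proved as stated, because it is false.
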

 \begin{remark}
 \textnormal{Although Theorem \ref{TE11} generalizes and improves Theorem \ref{tF}, but the bound given in \eqref{TE11e} demands that all the zeros of $r(z)$ be known beforehand. Since the computation of zeros is not always an easy piece of work, therefore in those situations where the zeros of $r(z)$ are unspecified, one may desire to have a result which instead of the zeros of $r(z)$ depends upon some coefficients of $p(z)$ in $r(z)=p(z)/w(z)$. Our next result serves this purpose.}
 \end{remark}
 \begin{corollary}\label{t2}
  Suppose $r\in \Re_{n}$ has all its zeros in $T_k \cup D_{k+}$, $k\geq1;$ that is $r(z)= p(z)/w(z)$ with $p(z)=c_n\prod\limits_{j=1}^{n}({z -z_j})=c_0 +c_1z+\dots+c_nz^n,$ $c_n\ne0 , |z_j|\ge k\ge 1$.Then for all $z\in T_1$, other than the zeros of $r(z)$ and $|\beta|\leq1,$
  \begin{align}\label{t2e}
\left|\frac{zr^\prime(z)}{r(z)}+\frac{\beta}{1+k}|B^\prime(z)|\right|\leq \frac{1}{2} \left\lbrace|B^\prime(z)|- \frac{n|r(z)|^{2}}{\|r\|^{2}}\frac{(k-1)}{(k+1)}- \frac{2|r(z)|^{2}}{(k+1)\|r\|^{2}}\left(\frac{|c_{0}|-k^{n}|c_{n}|}{|c_{0}|+k^{n}|c_{n}|} - Re(\beta) \ |B^{\prime}(z)| \right)\right\rbrace\frac{\|r\|}{|r(z)|}.
\end{align}
Inequality \ref{t2e} is sharp in the case $\beta=0$ and equality holds for
 \begin{align*}
 r(z) = \left(\frac{z+k}{z-a}\right)^n ~,   \quad  B(z) = \left(\frac{1 - az}{z - a}\right)^n, \quad at \quad z=1, \quad  a>1, \quad k\ge1 \quad and \quad \beta=0.
 \end{align*}
\end{corollary}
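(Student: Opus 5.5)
Corollary \ref{t2} follows from Theorem \ref{TE11} once we bound the only zero-dependent quantity in \eqref{TE11e}, namely $S:=\sum_{j=1}^{n} \frac{1}{1+|z_j|}$, from above by an expression in $|c_0|$ and $|c_n|$. The right-hand side of \eqref{TE11e} is increasing in $S$: $S$ enters as $-\frac{2|r(z)|^2}{\|r\|^2}\bigl(\frac{n}{k+1}-S-\dots\bigr)$, so a larger $S$ gives a weaker bound. It therefore suffices to prove
\begin{align*}
\frac{n}{k+1}-\sum_{j=1}^{n}\frac{1}{1+|z_j|}\ \ge\ \frac{1}{k+1}\cdot\frac{|c_0|-k^n|c_n|}{|c_0|+k^n|c_n|},
\end{align*}
and then substitute into \eqref{TE11e}. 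The $\mathrm{Re}(\beta)|B'(z)|/(1+k)$ term and the $n(k-1)/(k+1)$ term are carried over unchanged, which gives exactly \eqref{t2e}.

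To prove this inequality, write $|z_j|=k t_j$ with $t_j\ge 1$. Then
\begin{align*}
\frac{1}{k+1}-\frac{1}{1+kt_j}=\frac{k(t_j-1)}{(k+1)(1+kt_j)}.
\end{align*}
We must show that the sum of these terms is at least $\frac{1}{k+1}\cdot\frac{\prod t_j-1}{\prod t_j+1}$, since $|c_0|/|c_n|=\prod|z_j|=k^n\prod t_j$. Set $s_j=\frac{k(t_j-1)}{1+kt_j}$. For $k\ge1$ we have $s_j\ge \frac{t_j-1}{t_j+1}$: cross-multiplying, this reduces to $(t_j-1)(k-1)\ge0$. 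So it suffices to prove
\begin{align*}
\sum_{j}\frac{t_j-1}{t_j+1}\ge\frac{\prod_j t_j-1}{\prod_j t_j+1}\qquad (t_j\ge1).
\end{align*}
We prove this by induction on the number of factors. The base case is trivial. For the inductive step, with $A=\frac{a-1}{a+1}$ and $B=\frac{b-1}{b+1}$ in $[0,1)$, we have $\frac{ab-1}{ab+1}=\frac{A+B}{1+AB}\le A+B$. Applying this with $a=\prod_{j<m}t_j$, $b=t_m$ gives the step.

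Sharpness for $\beta=0$ follows from the example in Theorem \ref{TE11}. For $p(z)=(z+k)^n$ all $|z_j|=k$, so $|c_0|=k^n=k^n|c_n|$. Both the corrective term and $\frac{n}{k+1}-S$ then vanish, and the bounds of Corollary \ref{t2} and Theorem \ref{TE11} coincide, so equality at $z=1$ transfers.

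The only real obstacle is the elementary inequality bounding $S$. The hyperbolic-tangent-type addition identity $\frac{ab-1}{ab+1}=\frac{A+B}{1+AB}$ settles it cleanly. The rest is substitution.
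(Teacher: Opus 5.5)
Your proposal is correct and follows the paper's route. Your bound $s_j\ge\frac{t_j-1}{t_j+1}$ is exactly the paper's replacement of $1+|z_j|$ by $k+|z_j|$, and your product inequality is the paper's Lemma~\ref{l4}; the only additions are that you actually prove that lemma via the identity $\frac{ab-1}{ab+1}=\frac{A+B}{1+AB}$ (the paper omits the induction), and that you justify the sharpness claim by noting both correction terms vanish for $p(z)=(z+k)^n$.
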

\begin{proof}[\bf Proof of Corollary \ref{t2}]
In view of inequality $\ref{TE11e}$, we have
\begin{align}\label{t2ee}
\left|\frac{zr^\prime(z)}{r(z)}+\frac{\beta}{1+k}|B^\prime(z)|\right| \leq \nonumber \frac{1}{2} \left\lbrace|B^\prime(z)|- \frac{n|r(z)|^{2}}{\|r\|^{2}}\frac{(k-1)}{(k+1)}- \frac{2|r(z)|^{2}}{\|r\|^{2}}\left(\frac{n}{(k+1)} - \sum_{j=1}^{n} \frac{1}{1+|z_{j}|} - \frac{Re(\beta)}{(1+k)}|B^{\prime}(z)| \right)\right\rbrace\frac{\|r\|}{|r(z)|}  & \\ = \nonumber \frac{1}{2} \left\lbrace|B^\prime(z)|- \frac{n|r(z)|^{2}}{\|r\|^{2}}\frac{(k-1)}{(k+1)}- \frac{2|r(z)|^{2}}{\|r\|^{2}}\left(-\frac{1}{k+1}\sum_{j=1}^{n} \frac{k-|z_{j}|}{1+|z_{j}|} - \frac{Re(\beta)}{(1+k)}|B^{\prime}(z)| \right)\right\rbrace\frac{\|r\|}{|r(z)|} & \\ \leq \nonumber \frac{1}{2} \left\lbrace|B^\prime(z)|- \frac{n|r(z)|^{2}}{\|r\|^{2}}\frac{(k-1)}{(k+1)}- \frac{2|r(z)|^{2}}{\|r\|^{2}}\left(-\frac{1}{k+1}\sum_{j=1}^{n} \frac{k-|z_{j}|}{k+|z_{j}|} - \frac{Re(\beta)}{(1+k)}|B^{\prime}(z)| \right)\right\rbrace\frac{\|r\|}{|r(z)|} & \\
\leq \frac{1}{2} \left\lbrace|B^\prime(z)|- \frac{n|r(z)|^{2}}{\|r\|^{2}}\frac{(k-1)}{(k+1)} - \frac{2|r(z)|^{2}}{\|r\|^{2}}\left(-\frac{1}{k+1}\sum_{j=1}^{n} \frac{1-\frac{|z_{j}|}{k}}{1+\frac{|z_{j}|}{k}} - \frac{Re(\beta)}{(1+k)}|B^{\prime}(z)| \right)\right\rbrace\frac{\|r\|}{|r(z)|}.
 \end{align}
 Using Lemma \ref{l4} in inequality \eqref{t2ee} and noting that $\frac{|z_{j}|}{k}\geq1$, $j=1,2,\cdots,n$, we obtain
 \begin{align}
\left|\frac{zr^\prime(z)}{r(z)}+\frac{\beta}{1+k}|B^\prime(z)|\right| & \leq \nonumber \frac{1}{2} \left\lbrace|B^\prime(z)|- \frac{n|r(z)|^{2}}{||r||^{2}}\frac{(k-1)}{(k+1)}- \frac{2|r(z)|^{2}}{\|r\|^{2}}\left(-\frac{1}{k+1} \times \frac{1-\prod\limits_{j=1}^{n}\frac{|z_{j}|}{k}}{1+\prod\limits_{j=1}^{n}\frac{|z_{j}|}{k}} - \frac{Re(\beta)}{(1+k)}|B^{\prime}(z)| \right)\right\rbrace\frac{\|r\|}{|r(z)|} \\ & = \nonumber \frac{1}{2} \left\lbrace|B^\prime(z)|- \frac{n|r(z)|^{2}}{\|r\|^{2}}\frac{(k-1)}{(k+1)}- \frac{2|r(z)|^{2}}{\|r\|^{2}}\left( -\frac{1}{k+1} \times \frac{k^{n}|c_{n}|- |c_{0}|}{k^{n}|c_{n}|+|c_{0}|} - \frac{Re(\beta)}{(1+k)}|B^{\prime}(z)| \right)\right\rbrace\frac{\|r\|}{|r(z)|} \\ &  = \nonumber \frac{1}{2} \left\lbrace|B^\prime(z)|- \frac{n|r(z)|^{2}}{\|r\|^{2}}\frac{(k-1)}{(k+1)}- \frac{2|r(z)|^{2}}{(k+1)\|r\|^{2}}\left(\frac{|c_{0}|-k^{n}|c_{n}|}{|c_{0}|+k^{n}|c_{n}|} - Re(\beta)\ |B^{\prime}(z)| \right)\right\rbrace\frac{\|r\|}{|r(z)|}.
\end{align}
\end{proof}
 This completes the proof of corollary \ref{t2}.
 \begin{remark}
 \textnormal{For $k=1,$ Corollary \ref{t2} yields the following generalization and refinement of Theorem \ref{tE}.}
 \end{remark}
 \begin{corollary}\label{t3}
 Suppose $r\in \Re_{n}$ has all its zeros in $T_1 \cup D_{1+};$ that is $r(z)= p(z)/w(z)$ with $p(z)=c_n\prod\limits_{j=1}^{n}({z -z_j})=c_0 +c_1z+\dots +c_nz^n,$ $c_n\ne0 ,  |z_j|\ge 1$. Then for all $z\in T_1$, other than the zeros of $r(z)$ and $|\beta|\leq1,$ 
\begin{align}\label{t3e}
\left|\frac{zr^\prime(z)}{r(z)} +\frac{\beta}{2}|B^\prime(z)|\right|\leq \frac{1}{2} \left\lbrace|B^\prime(z)|- \frac{|r(z)|^{2}}{\|r\|^{2}}\left(\frac{|c_{0}|-|c_{n}|}{|c_{0}|+|c_{n}|} - Re(\beta)\ |B^{\prime}(z)| \right)\right\rbrace\frac{\|r\|}{|r(z)|}.
 \end{align}
 Inequality \ref{t3e} is sharp in the case $\beta=0$ and equality holds for
 \begin{align*}
 r(z) = \left(\frac{z+1}{z-a}\right)^n ~,   \quad  B(z) = \left(\frac{1 - az}{z - a}\right)^n, \quad at \quad z=1, \quad  a>1 \quad and \quad \beta=0.
 \end{align*}
 \end{corollary}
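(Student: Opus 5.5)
Corollary \ref{t3} is the special case $k=1$ of Corollary \ref{t2}, so I would substitute $k=1$ into \eqref{t2e} and simplify the right-hand side; no new estimate is needed.

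First, with $k=1$ the hypothesis of Corollary \ref{t2} reads: all zeros of $r$ lie in $T_1\cup D_{1+}$, i.e. $|z_j|\ge 1$. This is exactly the hypothesis of Corollary \ref{t3}, with the same representation $p(z)=c_0+c_1z+\dots+c_nz^n$ and $c_n\ne0$. On the left of \eqref{t2e} the factor $\frac{\beta}{1+k}$ becomes $\frac{\beta}{2}$, which gives the left side of \eqref{t3e}.

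On the right of \eqref{t2e}, the term $\frac{n|r(z)|^2}{\|r\|^2}\frac{k-1}{k+1}$ vanishes when $k=1$. The coefficient $\frac{2|r(z)|^2}{(k+1)\|r\|^2}$ becomes $\frac{|r(z)|^2}{\|r\|^2}$. The quotient $\frac{|c_0|-k^n|c_n|}{|c_0|+k^n|c_n|}$ becomes $\frac{|c_0|-|c_n|}{|c_0|+|c_n|}$. Collecting these gives exactly the bracket in \eqref{t3e}, multiplied by $\frac{\|r\|}{|r(z)|}$.

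For sharpness, take the extremal function of Corollary \ref{t2} with $k=1$: $r(z)=\bigl(\frac{z+1}{z-a}\bigr)^n$, $B(z)=\bigl(\frac{1-az}{z-a}\bigr)^n$, $\beta=0$, at the point $z=1$. Equality for this $r$ is inherited directly from Corollary \ref{t2}, so no separate check is required. As a consistency check, here $p(z)=(z+1)^n$, so $|c_0|=|c_n|=1$ and the coefficient term vanishes. The inequality then reduces to $\bigl|\frac{zr'(z)}{r(z)}\bigr|\le \frac12|B'(z)|\frac{\|r\|}{|r(z)|}$. Since $\|r\|=|r(1)|$ and both sides can be evaluated explicitly at $z=1$, this equality can be confirmed by direct computation.

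The only point that needs care is verifying that the coefficient $\frac{2}{k+1}$ collapses to $1$, so the factor of $2$ in front of $\frac{|r(z)|^2}{\|r\|^2}$ in \eqref{t2e} correctly disappears in \eqref{t3e}. The rest is routine substitution.
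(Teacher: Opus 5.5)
Your reduction is exactly the paper's route: the paper obtains Corollary~\ref{t3} by setting $k=1$ in Corollary~\ref{t2}. Your simplification of both sides is correct, and so is your equality check at $z=1$ for $r(z)=\bigl(\frac{z+1}{z-a}\bigr)^n$ with $\beta=0$. The genuine problem is what you inherit. Corollary~\ref{t2}, and Theorem~\ref{TE11} behind it, is false for $\beta\neq0$, so your argument proves \eqref{t3e} only for $\beta=0$. In fact \eqref{t3e} itself fails for other $\beta$. Take $n=1$ and $r(z)=\frac{z+1}{z-2}$, so $B(z)=\frac{1-2z}{z-2}$, $c_0=c_1=1$, the zero $-1$ lies on $T_1$, and $\|r\|=|r(1)|=2$. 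At $z=i$ one gets $\frac{zr'(z)}{r(z)}=\frac{3i}{3+i}=\frac{3+9i}{10}$, $|B'(i)|=\frac35$ and $\frac{|r(i)|^2}{\|r\|^2}=\frac1{10}$. With $\beta=-1$ the left side of \eqref{t3e} is $\bigl|\frac{3+9i}{10}-\frac{3}{10}\bigr|=\frac{9}{10}$. The right side is $\frac12\bigl(\frac35-\frac1{10}\cdot\frac35\bigr)\sqrt{10}=\frac{27\sqrt{10}}{100}\approx0.854$. With $\beta=1$ one gets $\sqrt{1.17}\approx1.082$ against $\frac{33\sqrt{10}}{100}\approx1.044$.

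The error is in the proof of Theorem~\ref{TE11}, right after \eqref{18}. The right side of \eqref{18} is $|u-v|$ with $u=(r^*)'(z)/\overline{r(z)}$ and $v=\overline{\beta}B'(z)/(1+k)$. The paper replaces it by $|u|-|v|$ by ``choosing the argument of $\beta$''. But $\beta$ is already fixed on the left of \eqref{18}, and for fixed $\beta$ only $|u-v|\ge|u|-|v|$ holds, which is the wrong direction. In the example above $r^*=r$, and \eqref{18} is an equality with both sides equal to $\sqrt{1.17}$, while $|u|-|v|=\sqrt{0.9}-0.3\approx0.65$.

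For $\beta=0$ this step is vacuous. The radicand in \eqref{18} is then $|zr'/r|^2+|B'(z)|\bigl(n-2\sum_j\frac{1}{1+|z_j|}\bigr)$, which is nonnegative, and \eqref{15}, \eqref{18}, Lemma~\ref{l3} and then Lemma~\ref{l4} give \eqref{t3e} correctly. So what your substitution legitimately establishes is the case $\beta=0$, which is also the only case in which sharpness is claimed. The statement for general $|\beta|\le1$ cannot be proved, because it is false.
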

 As an application of Theorem \ref{tA}, we next present the following generalization and refinement of Theorem \ref{tC} concerning polynomials not vanishing in the open disk $D_{k-},\ k \ge 1.$
\begin{corollary}\label{t4}
  If $p\in P_n$ having no zeros in the open unit disc $D_{k-},\ k \ge 1,$ i,e $p(z)=c_{n}\prod\limits_{j=1}^{n}(z-z_{j})=c_{0}+c_{1}z+\cdots+c_{n}z^{n}$, $c_{n}\neq0$, $|z_{j}|\geq k\geq1$, $j=1,2,\cdots,n$. Then for $z\in T_1$ and $|\beta|\leq1$,   
 \begin{align*}
\left|zp^\prime(z) + \frac{n \beta}{1+k} p(z)\right|\leq \frac{\|p\|}{2}\left\lbrace n-\frac{2}{(k+1)}\left(\frac{n(k-1)}{2}+\frac{|c_{0}|-k^{n}|c_{n}|}{|c_{0}+k^{n}|c_{n}|}-n \ Re(\beta) \right)\frac{|p(z)|^{2}}{\|p\|^{2}}\right\rbrace.
 \end{align*}
 \begin{proof} [\bf Proof of Corollary \ref{t4}]
 Taking $r(z)=\frac{p(z)}{w(z)}$ with $w(z)=(z-\alpha)^{n}$ $\alpha >k\geq1$, so that $B(z)=\left(\frac{1-\alpha z}{z-\alpha}\right)^{n},$ from corollary \ref{t2}, we obtain
\begin{align}\label{t4e}
\left|\frac{zr^\prime(z)}{r(z)}+\frac{\beta}{1+k}|B^\prime(z)|\right| & \leq \nonumber \frac{1}{2} \left\lbrace|B^\prime(z)|- \frac{n|r(z)|^{2}}{\|r\|^{2}}\frac{(k-1)}{(k+1)}- \frac{2|r(z)|^{2}}{(k+1)\|r\|^{2}}\left(\frac{|c_{0}|-k^{n}|c_{n}|}{|c_{0}|+k^{n}|c_{n}|} - Re(\beta) \ |B^{\prime}(z)| \right)\right\rbrace\frac{\|r\|}{|r(z)|} \\ & =\frac{1}{2}\left\lbrace|B^{\prime}(z)|-\frac{2}{(k+1)}\left(\frac{n(k-1)}{2}+ \frac{|c_{0}|-k^{n}|c_{n}|}{|c_{0}|+k^{n}|c_{n}|} -Re(\beta) \ |B^{\prime}(z)| \right)\frac{|r(z)|^{2}}{\|r\|^2} \right\rbrace\frac{\|r\|}{|r(z)|}.
\end{align}
Assuming $\| r \|=|r(e^{i\theta})|=|\frac{p(e^{i\psi_{0}}}{w(e^{i\psi_{0}}}|$, for $\psi_{0}\in [0,2\pi)$, inequality \eqref{t4e} can be written as
\begin{align*}
\left|z\left(\frac{p^{\prime}(z)}{p(z)}-\frac{n}{(z-\alpha)}\right)+ \frac{\beta}{1+k}|B^{\prime}(z)|\right| 
\end{align*}
\begin{align}\label{t4ee}
\leq  \frac{1}{2}\left\lbrace|B^{\prime}(z)|-\frac{2}{(k+1)}\left(\frac{n(k-1)}{2} + \frac{|c_{0}-k^{n}|c_{n}|}{|c_{0}|+k^{n}|c_{n}|}-  Re(\beta)\ |B^{\prime}(z)|\right)\frac{|p(z)|^{2}}{|p(e^{i\psi_{0}})|^{2}} \left|\frac{e^{i\psi_{0}}-\alpha}{z- \alpha}\right|^{2n} \right\rbrace\frac{|p(e^{i\psi_{0}}|}{|e^{i\psi_{0}} -\alpha|^{n}}\frac{|z-\alpha|^n}{|p(z)|}.
\end{align}
Since this is true for every $\alpha >1$, letting $\alpha \to \infty$ in inequality \eqref{t4ee} and noting that $|B^{\prime}(z)|\rightarrow |nz|=n$ as $\alpha \rightarrow \infty$ for $z\in T_1$, we obtain
\begin{align*}
\left|zp^\prime(z) + \frac{n \beta}{1+k} p(z)\right| & \leq  \frac{1}{2}\left\lbrace|B^{\prime}(z)|-\frac{2}{(k+1)}\left(\frac{n(k-1)}{2} + \frac{|c_{0}-k^{n}|c_{n}|}{|c_{0}|+k^{n}|c_{n}|}-  Re(\beta)\ |B^{\prime}(z)|\right)\frac{|p(z)|^{2}}{|p(e^{i\psi_{0}})|^{2}} \right\rbrace |p(e^{i\psi_{0}}| \\ & \leq \frac{\|p \|}{2}\left\lbrace n-\frac{2}{(k+1)}\left(\frac{n(k-1)}{2} + \frac{|c_{0}-k^{n}|c_{n}|}{|c_{0}|+k^{n}|c_{n}|}- Re(\beta) \ |B^{\prime}(z)|\right)\frac{|p(z)|^{2}}{\|p\|^{2}}\right\rbrace.
\end{align*}
This completes the proof of corollary \ref{t4}.
 \end{proof}
 \end{corollary}
 \begin{remark}
 \textnormal{For $k=1,$ Corollary \ref{t4} yields the following generalization and refinement of Theorem \ref{tB}.}
 \end{remark}
\begin{corollary}\label{t5}
  If $p\in P_n$ having no zeros in the open unit disc $D_{1-},$ i,e $p(z)=c_{n}\prod\limits_{j=1}^{n}(z-z_{j})=c_{0}+c_{1}z+\cdots + c_{n}z^{n}$, $c_{n}\neq0$, $|z_{j}|\geq 1$, $j=1,2,\cdots,n$. Then for $z\in T_1$ and $|\beta|\leq1$,
\begin{align*}
\left|zp^\prime(z) + \frac{n \beta}{2} p(z)\right|\leq \frac{\|p\|}{2}\left\lbrace n-\left(\frac{|c_{0}|-|c_{n}|}{|c_{0}+|c_{n}|}-n \ Re(\beta) \right)\frac{|p(z)|^{2}}{\|p\|^{2}}\right\rbrace.
\end{align*}
\\~\\
 \end{corollary}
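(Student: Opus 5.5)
Corollary \ref{t5} is simply the case $k=1$ of Corollary \ref{t4}, so I will specialize that result rather than reprove anything.

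\textbf{Step 1: substitute $k=1$.} Put $k=1$ in the inequality of Corollary \ref{t4}. On the left-hand side $\frac{n\beta}{1+k}$ becomes $\frac{n\beta}{2}$, which gives $\left|zp^\prime(z)+\frac{n\beta}{2}p(z)\right|$.

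\textbf{Step 2: simplify the right-hand side.} With $k=1$ the factor $\frac{2}{k+1}$ equals $1$. The term $\frac{n(k-1)}{2}$ vanishes, and $k^n|c_n|=|c_n|$. The bracket therefore reduces to
\begin{align*}
n-\left(\frac{|c_0|-|c_n|}{|c_0|+|c_n|}-n\,Re(\beta)\right)\frac{|p(z)|^2}{\|p\|^2},
\end{align*}
which is exactly the bound claimed in Corollary \ref{t5}. The hypotheses also agree: $p$ has no zeros in $D_{1-}$ means $|z_j|\ge 1$.

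\textbf{Step 3: check the derivation of Corollary \ref{t4}.} The only point needing care is the passage $\alpha\to\infty$ in the proof of Corollary \ref{t4}. That inequality is derived from Corollary \ref{t2} with $w(z)=(z-\alpha)^n$. I will check three limits:
\begin{itemize}
\item $|B^\prime(z)|=n\frac{\alpha^2-1}{|z-\alpha|^2}\to n$ uniformly on $T_1$;
\item $z\cdot\frac{n}{z-\alpha}\to 0$;
\item $\left|\frac{e^{i\psi_0}-\alpha}{z-\alpha}\right|^{2n}\to1$.
\end{itemize}
So every $\alpha$-dependent factor has the stated limit. Two further points close the argument. First, $\psi_0$ depends on $\alpha$, but $\|r\|\,|w(z)|\le \|p\|\,\frac{(\alpha+1)^n}{(\alpha-1)^n}\to\|p\|$. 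Second, the coefficient of $\frac{|r(z)|^2}{\|r\|^2}$ can have either sign, so I pass to the limit in the form $\frac12\bigl(A\|r\|-C|r(z)|^2/\|r\|\bigr)$, where both $\|r\||w(z)|$ and $|p(z)|$ converge. This requires $\|r\||w(z)|\to\|p\|$ exactly, not just an upper bound.

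\textbf{Main obstacle.} The lower bound $\|r\|\,|w(z)|\ge |p(e^{i\phi})|\frac{(\alpha-1)^n}{(\alpha+1)^n}$ for the maximizing $\phi$ of $|p|$ also tends to $\|p\|$, which yields the exact limit. With that, the $k=1$ substitution completes the proof.
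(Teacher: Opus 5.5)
Your reduction is the paper's own: put $k=1$ in Corollary~\ref{t4}. The $k=1$ algebra and your handling of the limit $\alpha\to\infty$ in Step~3 are both fine. The gap is that the result you specialize is false for $\beta\neq 0$, and so is the statement itself.

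Take $n=1$ and $p(z)=z+1$, so the zero is at $-1$, $c_0=c_1=1$ and $\|p\|=2$. At $z=1$ the claimed inequality reads $|1+\beta|\le 1+\mathrm{Re}(\beta)$. This fails for every non-real $\beta$; for example, $\beta=i$ would require $\sqrt2\le 1$. The same failure occurs for $(z+1)^n$ at $z=1$, which is the paper's own extremal. For real $\beta$ and $z=e^{i\theta}$ one computes
\[
\Bigl|z+\tfrac{\beta}{2}(z+1)\Bigr|^2-\Bigl(1+\tfrac{\beta}{2}(1+\cos\theta)\Bigr)^2=\tfrac{\beta^2}{4}\sin^2\theta .
\]
The right-hand bound $1+\tfrac{\beta}{2}(1+\cos\theta)$ is nonnegative, so the inequality fails whenever $\beta\neq 0$ and $z\neq\pm 1$. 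For instance, $\beta=1$, $z=i$ gives $\sqrt{10}/2>3/2$. So no argument can prove the corollary as stated for any $\beta\neq 0$.

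The error lies upstream of what you checked, in the proof of Theorem~\ref{TE11}. After \eqref{18} the authors ``choose the argument of $\beta$'' so that $\bigl|\frac{(r^*)'}{\overline{r}}-\overline{\beta}\frac{B'}{1+k}\bigr|=\bigl|\frac{(r^*)'}{r}\bigr|-|\beta|\frac{|B'|}{1+k}$. But $\beta$ is a given parameter that also occurs on the left of \eqref{18}, both in $\frac{zr'}{r}+\frac{\beta}{1+k}|B'|$ and through $\mathrm{Re}(\beta)$, so its argument cannot be chosen. For general $\beta$ the reverse triangle inequality gives only ``$\ge$'', which is the wrong direction.

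Consequently Theorem~\ref{TE11}, Corollaries~\ref{t2} and \ref{t4}, and this corollary are justified only for $\beta=0$. In that case Lemma~\ref{l5} is not needed, and Lemma~\ref{l3} alone closes the argument. Read with $\beta=0$, your proof correctly gives $|zp'(z)|\le\frac{\|p\|}{2}\bigl\{n-\frac{|c_0|-|c_n|}{|c_0|+|c_n|}\frac{|p(z)|^2}{\|p\|^2}\bigr\}$, and that is all this route can deliver.
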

 \section{Lemmas}
\noindent For the proof of our results, we need the following lemmas. The first lemma is due to Aziz and Zargar \cite{AZ}.
\begin{lemma}\label{l1}
If $z\in T_1$, then
\begin{align*}
Re\left(\frac{zw^\prime(z)}{w(z)}\right) = \frac{n - |B^\prime(z)|}{2},
\end{align*} 
 where $w(z) = \prod\limits_{j=1}^{n}(z -a_j)$. 
\end{lemma}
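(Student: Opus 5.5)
The identity is a direct computation on the unit circle, so I will differentiate $B$ logarithmically and compare the result with $zw'(z)/w(z)$ term by term.

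\emph{Step 1: logarithmic derivative of $B$.} Write
\[
B(z)=\frac{w^*(z)}{w(z)},\qquad w^*(z)=z^n\overline{w(1/\bar z)}=\prod_{j=1}^{n}(1-\overline{a_j}z).
\]
Differentiating logarithmically gives
\begin{align*}
\frac{zB'(z)}{B(z)}=\sum_{j=1}^{n}\left(\frac{-\overline{a_j}z}{1-\overline{a_j}z}-\frac{z}{z-a_j}\right).
\end{align*}

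\emph{Step 2: simplify each term for $|z|=1$.} Put $\bar z=1/z$. Then
\[
\frac{-\overline{a_j}z}{1-\overline{a_j}z}=\frac{-\overline{a_j}}{\bar z-\overline{a_j}}.
\]
Adding the second term, the $j$-th summand becomes
\begin{align*}
-\left(\frac{\overline{a_j}}{\bar z-\overline{a_j}}+\frac{z}{z-a_j}\right)
=-\frac{\overline{a_j}(z-a_j)+z(\bar z-\overline{a_j})}{|z-a_j|^2}
=\frac{|a_j|^2-1}{|z-a_j|^2}.
\end{align*}
Each summand is therefore real and positive, because $|a_j|>1$. Since $|B(z)|=1$ on $T_1$, we get
\[
|B'(z)|=\left|\frac{zB'(z)}{B(z)}\right|=\sum_{j=1}^{n}\frac{|a_j|^2-1}{|z-a_j|^2}.
\]

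\emph{Step 3: real part of $zw'/w$.} We have
\[
\frac{zw'(z)}{w(z)}=\sum_{j=1}^{n}\frac{z}{z-a_j},\qquad \mathrm{Re}\,\frac{z}{z-a_j}=\frac{\mathrm{Re}\big(z(\bar z-\overline{a_j})\big)}{|z-a_j|^2}=\frac{1-\mathrm{Re}(\overline{a_j}z)}{|z-a_j|^2}.
\]
Expanding the modulus on $|z|=1$,
\[
|z-a_j|^2=1-2\,\mathrm{Re}(\overline{a_j}z)+|a_j|^2 .
\]
Hence
\[
1-\mathrm{Re}(\overline{a_j}z)=\tfrac12\big(|z-a_j|^2-(|a_j|^2-1)\big),
\]
and so
\[
\mathrm{Re}\,\frac{z}{z-a_j}=\frac12-\frac12\,\frac{|a_j|^2-1}{|z-a_j|^2}.
\]
Summing over $j$ and using Step 2 yields $\mathrm{Re}\big(zw'(z)/w(z)\big)=\big(n-|B'(z)|\big)/2$.

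The only delicate point is the sign bookkeeping in Step 2. The hypothesis $|a_j|>1$ is exactly what makes every summand positive, which is what allows $|B'(z)|$ to be written as the sum without absolute values.
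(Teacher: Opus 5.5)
Your computation is correct and complete. The paper gives no proof of this lemma; it simply cites Aziz and Zargar, and your term-by-term calculation is the standard argument behind that citation: you obtain $|B'(z)| = \sum_{j}(|a_j|^2-1)/|z-a_j|^2$ and $\mathrm{Re}\,\frac{z}{z-a_j} = \frac12 - \frac12\,\frac{|a_j|^2-1}{|z-a_j|^2}$ on $T_1$.

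Your Step 2 also shows that $zB'(z)/B(z)$ is a positive real number on $T_1$. That is exactly Lemma \ref{l2}, so your argument makes both cited lemmas self-contained.
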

The following two lemmas are due to Li,Mohapatra and Rodrigues \cite{LMR}.
\begin{lemma}\label{l2}
If $z\in T_1$, then
\begin{align*}
z\frac{B^{\prime}(z)}{B(z)}=\left|B^\prime(z)\right|.
\end{align*}
\end{lemma}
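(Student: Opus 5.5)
The plan is a direct computation with the logarithmic derivative of the Blaschke product $B(z)=\prod_{j=1}^{n}\frac{1-\overline{a_j}z}{z-a_j}$. Since all $a_j\in D_{1+}$, $B$ is analytic and nonvanishing in a neighbourhood of $T_1$, so $B'/B$ is defined there. I will show two things:
\begin{itemize}
\item $zB'(z)/B(z)$ is a positive real number on $T_1$;
\item its modulus equals $|B'(z)|$.
\end{itemize}
Together these give the identity.

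\textbf{Step 1: the logarithmic derivative.} Differentiating $\log B$ termwise gives
\begin{align*}
\frac{zB'(z)}{B(z)}=\sum_{j=1}^{n}\left(\frac{-\overline{a_j}z}{1-\overline{a_j}z}-\frac{z}{z-a_j}\right)
=\sum_{j=1}^{n}\frac{z\left(|a_j|^2-1\right)}{(1-\overline{a_j}z)(z-a_j)}.
\end{align*}
The second form comes from putting each summand over the common denominator $(1-\overline{a_j}z)(z-a_j)$. In the numerator the $\overline{a_j}z^2$ terms cancel, leaving $z(|a_j|^2-1)$.

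\textbf{Step 2: positivity on $T_1$.} For $|z|=1$ we have $\bar z=1/z$, so
\begin{align*}
1-\overline{a_j}z=z\left(\bar z-\overline{a_j}\right)=z\,\overline{(z-a_j)}.
\end{align*}
Substituting this into the denominator, each summand becomes
\begin{align*}
\frac{|a_j|^2-1}{|z-a_j|^2}.
\end{align*}
This is strictly positive because $|a_j|>1$. Hence $zB'(z)/B(z)=\sum_{j=1}^{n}\frac{|a_j|^2-1}{|z-a_j|^2}>0$.

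\textbf{Step 3: identifying the modulus.} On $T_1$ we have $|z|=1$ and $|B(z)|=1$, as noted in the introduction. Therefore
\begin{align*}
\left|\frac{zB'(z)}{B(z)}\right|=|B'(z)|.
\end{align*}
A positive real number equals its own modulus, so $zB'(z)/B(z)=|B'(z)|$, which is the claim.

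The only delicate point is the algebraic simplification in Step 1 together with the substitution $1-\overline{a_j}z=z\overline{(z-a_j)}$ in Step 2. That substitution is what makes each term manifestly real and positive; everything else is routine.
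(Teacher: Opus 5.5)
Your proof is correct. The computation $zB'(z)/B(z)=\sum_{j=1}^{n}(|a_j|^2-1)/|z-a_j|^2>0$ on $T_1$, combined with $|z|=|B(z)|=1$, is the standard argument; the paper gives no proof of its own but attributes the lemma to Li, Mohapatra and Rodriguez, and their proof rests on this same identity.
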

\begin{lemma}\label{l3}
If $r\in \Re_{n}$ and $r^{*}(z)=B(z)\overline{r\left(\frac{1}{\overline{z}}\right)}$, then for $z\in T_1$, we have
\begin{align*}
\left|\left(r^{*}(z)\right)^{\prime}\right|+\left|r^{\prime}(z)\right|\leq \left|B^{\prime}(z)\right| \|r\|.
\end{align*}
\end{lemma}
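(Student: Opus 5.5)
The plan is to deduce the estimate from a comparison $|(R^*)^\prime(z)|\le |R^\prime(z)|$ on $T_1$, valid for every $R\in\Re_n$ whose zeros all lie in $D_{1-}$. We apply it to $R=r-\lambda\|r\|B$ with $|\lambda|>1$, choose $\arg\lambda$ suitably, and let $|\lambda|\to1$.

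\emph{Step 1 (location of zeros).} Write
\begin{align*}
\frac{r(z)}{B(z)}=\frac{p(z)}{\prod_{j=1}^{n}(1-\overline{a_j}z)}.
\end{align*}
Its poles $1/\overline{a_j}$ lie in $D_{1-}$, and it has a finite limit at $\infty$. So it is analytic in $T_1\cup D_{1+}\cup\{\infty\}$. On $T_1$ its modulus is $|r(z)|\le\|r\|$. By the maximum principle, $|r(z)|\le\|r\|\,|B(z)|$ for $|z|\ge1$. Hence for $|\lambda|>1$ the function $R(z)=r(z)-\lambda\|r\|B(z)\in\Re_n$ has no zeros in $T_1\cup D_{1+}$. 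The numerator of $R$ is $p(z)-\lambda\|r\|\prod_j(1-\overline{a_j}z)$. Its $z^n$ coefficient is $c_n-\lambda\|r\|\prod_j(-\overline{a_j})$, which vanishes for at most one value of $\lambda$. Excluding that value, the numerator has exactly $n$ zeros, all in $D_{1-}$.

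\emph{Step 2 (comparison for such $R$).} Let $R=q/w$ with $q(z)=c\prod_{j=1}^{n}(z-\zeta_j)$ and $|\zeta_j|<1$. For $z\in T_1$ we have $\mathrm{Re}\,\frac{z}{z-\zeta_j}>\frac12$. Combining this with Lemma \ref{l1} gives
\begin{align*}
\mathrm{Re}\Bigl(\frac{zR^\prime(z)}{R(z)}\Bigr)=\sum_{j=1}^{n}\mathrm{Re}\frac{z}{z-\zeta_j}-\mathrm{Re}\frac{zw^\prime(z)}{w(z)}\ \ge\ \frac n2-\frac{n-|B^\prime(z)|}{2}=\frac{|B^\prime(z)|}{2}.
\end{align*}
Now differentiate $R^*(z)=B(z)\overline{R(1/\overline z)}$ and use Lemma \ref{l2}. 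On $T_1$ this gives $\frac{z(R^*)^\prime}{R^*}=|B^\prime(z)|-\overline{u}$, where $u=zR^\prime/R$. Therefore
\begin{align*}
\Bigl|\frac{z(R^*)^\prime}{R^*}\Bigr|^2=|B^\prime|^2-2|B^\prime|\,\mathrm{Re}\,u+|u|^2\le|u|^2 .
\end{align*}
Since $|R^*|=|R|\neq0$ on $T_1$, we obtain $|(R^*)^\prime(z)|\le|R^\prime(z)|$ there.

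\emph{Step 3 (conclusion).} Because $B^*\equiv1$, we have $R^*=r^*-\overline{\lambda}\|r\|$, so $(R^*)^\prime=(r^*)^\prime$. Step 2 then gives
\begin{align*}
|(r^*)^\prime(z)|\le|r^\prime(z)-\lambda\|r\|B^\prime(z)|\qquad(z\in T_1,\ |\lambda|>1).
\end{align*}
Fix $z\in T_1$. By Theorem \ref{tD}, $|r^\prime(z)|\le|B^\prime(z)|\|r\|<|\lambda|\|r\||B^\prime(z)|$. So we may choose $\arg\lambda$ with $\lambda\|r\|B^\prime(z)$ pointing in the direction of $r^\prime(z)$. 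The right side then equals $|\lambda|\|r\||B^\prime(z)|-|r^\prime(z)|$. Letting $|\lambda|\to1^+$, avoiding the single excluded $\lambda$ from Step 1, yields the lemma.

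The main obstacle is Step 2: computing $z(R^*)^\prime/R^*$ correctly on $T_1$ via Lemma \ref{l2}, and verifying that the numerator of $R$ has full degree $n$ so that the bound $n/2$ is available. The degree issue is handled by the exclusion of one $\lambda$ in Step 1, together with continuity in $\lambda$.
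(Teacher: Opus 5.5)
Your proof is correct and is essentially the standard argument of Li, Mohapatra and Rodriguez. The paper does not reproduce that argument but only cites it: perturb to $R=r-\lambda\|r\|B$ with $|\lambda|>1$, which has all $n$ zeros in $D_{1-}$, apply $|(R^*)^\prime|\le|R^\prime|$ on $T_1$ together with $R^*=r^*-\overline{\lambda}\|r\|$, rotate $\lambda$, and let $|\lambda|\to1^+$.

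Excluding a value of $\lambda$ in Step 1 is unnecessary. Your maximum-principle bound at $\infty$ already gives $|c_n|\le\|r\|\prod_j|a_j|$, so the leading coefficient of the numerator of $R$ never vanishes for $|\lambda|>1$. You should also set aside the trivial case $r\equiv0$ separately.
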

\begin{lemma}\label{l4}
If $\langle \zeta_j \rangle_{j=1}^\infty$ be a sequence of real numbers such that $\zeta_j\ge 1 $, $j \in \mathbb{N},$ then
\begin{align*}
\sum_{j=1}^n \frac{1- \zeta_j}{ 1+\zeta_j}\leq \frac{1- \prod\limits_{j=1}^{n}  \zeta_j}{1+ \prod\limits_{j=1}^{n}  \zeta_j }\ \ \forall \  n\in\mathbb{N}.
\end{align*}
\begin{proof}
The claim follows easily by induction on $n$. Hence, we omit details.
\end{proof}
\end{lemma}
The last lemma which we need is due to Aziz and Shah \cite{AS}.
\begin{lemma}\label{l5}
If $r\in \Re_{n}$ and all the $n$ zeros of $r$ lie in $T_k \cup D_{k-}$, $k\leq 1$, then for $z\in T_1$,
\begin{align*}
\left|r^{\prime}(z)\right|\geq \frac{1}{2}\left\lbrace\left|B^{\prime}(z)\right|+\frac{n(1-k)}{(1+k)}\right\rbrace\left|r(z)\right|.
\end{align*}
\end{lemma}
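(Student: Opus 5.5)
The plan is to work with the logarithmic derivative of $r$ and bound its real part from below. Write $r(z)=p(z)/w(z)$ with $p(z)=c_n\prod_{j=1}^{n}(z-z_j)$, $c_n\neq 0$, $|z_j|\le k\le 1$. Fix $z\in T_1$.

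If $r(z)=0$, then the right-hand side vanishes and there is nothing to prove. Note that $p(z)=0$ with $|z|=1$ forces $k=1$ and $z=z_j$ for some $j$, so this case is covered. So assume $r(z)\neq 0$. Then
\begin{align*}
\frac{zr'(z)}{r(z)}=\frac{zp'(z)}{p(z)}-\frac{zw'(z)}{w(z)}=\sum_{j=1}^{n}\frac{z}{z-z_j}-\frac{zw'(z)}{w(z)},
\end{align*}
and by Lemma \ref{l1} the real part of the last term is exactly $\frac{n-|B'(z)|}{2}$.

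The key step is the elementary estimate
\begin{align*}
Re\left(\frac{z}{z-z_j}\right)\ge \frac{1}{1+|z_j|}\ge\frac{1}{1+k}\qquad(|z|=1,\ |z_j|\le k\le 1,\ z\ne z_j).
\end{align*}
To prove it, write $\frac{z}{z-z_j}=\frac{1}{1-u}$ with $u=z_j/z$ and $|u|=|z_j|=:\rho\le 1$. The Möbius map $u\mapsto 1/(1-u)$ sends the circle $|u|=\rho<1$ onto a circle symmetric about the real axis, passing through $1/(1+\rho)$ and $1/(1-\rho)$. Hence the real part on the closed disc $|u|\le\rho$ is minimized at $u=-\rho$, giving $1/(1+\rho)$. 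When $\rho=1$ the image is the line $Re=1/2=1/(1+\rho)$, so the bound still holds. Equivalently, one can verify directly that
\begin{align*}
Re\frac{1}{1-u}=\frac{1-Re\,u}{|1-u|^2}
\end{align*}
and that $(1-Re\,u)(1+\rho)\ge |1-u|^2$ when $|u|=\rho\le1$. This inequality reduces to $(1-\rho)(\rho-Re\,u)\ge0$, which holds.

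Summing over $j$ gives $Re\frac{zp'(z)}{p(z)}\ge\frac{n}{1+k}$, and therefore
\begin{align*}
\frac{|r'(z)|}{|r(z)|}=\left|\frac{zr'(z)}{r(z)}\right|\ge Re\frac{zr'(z)}{r(z)}\ge \frac{n}{1+k}-\frac{n-|B'(z)|}{2}=\frac12\left\{|B'(z)|+\frac{n(1-k)}{1+k}\right\}.
\end{align*}
Multiplying by $|r(z)|$ gives the claim.

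The only step with any substance is the real-part estimate for $z/(z-z_j)$, including the boundary case $|z_j|=1$. The algebraic verification above handles all cases uniformly, so I expect no real obstacle.
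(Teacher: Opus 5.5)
Your proof is correct. The paper does not prove this lemma; it only quotes it from Aziz and Shah, so there is no in-paper argument to match. What you give is a self-contained proof, and it is the mirror image of the computation the paper does at the start of the proof of Theorem~\ref{TE11}: logarithmic derivative, Lemma~\ref{l1}, and a termwise bound on $\mathrm{Re}\{z/(z-z_j)\}$. There the bound is $\le 1/(1+|z_j|)$ for $|z_j|\ge 1$; here it is $\ge 1/(1+|z_j|)$ for $|z_j|\le 1$.

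There is one algebraic slip. With $|u|=\rho$ one has
\[
(1-\mathrm{Re}\,u)(1+\rho)-|1-u|^2=(1-\rho)(\rho+\mathrm{Re}\,u),
\]
not $(1-\rho)(\rho-\mathrm{Re}\,u)$. The conclusion is unaffected, since $|\mathrm{Re}\,u|\le\rho$. The corrected factor vanishes at $u=-\rho$, which agrees with your M\"obius-map picture of where the minimum $1/(1+\rho)$ is attained.

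Compared with citing the lemma, your route gives two things. First, it yields the sharper zero-dependent bound
\[
|r'(z)|\ge\left\{\sum_{j=1}^{n}\frac{1}{1+|z_j|}-\frac{n-|B'(z)|}{2}\right\}|r(z)|,
\]
which is the exact analogue of the refinement in Theorem~\ref{TE11}. Second, it shows that reading the hypothesis as $\deg p=n$, which you did via $c_n\neq 0$, is essential. Only $\deg p$ terms enter the sum, and the lemma genuinely fails otherwise. For example, take $n=1$, $r(z)=1/(z-a)$ with $a>3$, and $z=-1$: then $|r'/r|=1/(1+a)$ while $\tfrac12|B'(-1)|=\tfrac{a-1}{2(a+1)}$ is larger.
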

\section{Proofs of the theorems}
\begin{proof}[\bf Proof of Theorem \ref{TE11}]
Since $r\in \Re_{n}$ and all the zeros of $r(z)$ lie in $T_k \cup D_{k+}$, $k\geq 1$. Let $z_{1},z_{2},\dots,z_{n}$ be the $n$ zeros of $r(z) = \frac{p(z)}{w(z)}$, where $p(z)=c_{n}\prod\limits_{j=1}^{n}(z-z_{j})$; $|z_{j}|\geq k \geq1, j=1,2,\dots,n$.
Then for $|\beta|\leq1$ and for all $z\in T_1$, where $r(z)\ne 0$, we have 
\begin{align*}
Re\left\{\frac{zr^\prime(z)}{r(z)} + \frac{\beta}{1+k}|B^\prime(z)|\right\} & = Re\left\{\frac{zr^\prime(z)}{r(z)}\right\} + \frac{|B^\prime(z)|}{1+k}Re\left\{\beta\right\}\\ & = Re\left\{\frac{zp^\prime(z)}{p(z)} - \frac{zw^\prime(z)}{w(z)}\right\} + \frac{|B^\prime(z)|}{1+k}Re\left\{\beta\right\} \\ & = Re\left\{\frac{zp^\prime(z)}{p(z)}\right\} - Re\left\{\frac{zw^\prime(z)}{w(z)}\right\} + \frac{|B^\prime(z)|}{1+k}Re\left\{\beta\right\}.
\end{align*}
On using Lemma \ref{l1}, we have for $|\beta|\leq 1$ and for all $z\in T_1$, where $r(z)\ne 0$,
\begin{align}\nonumber\label{14}
Re\left\{\frac{zr^\prime(z)}{r(z)} + \frac{\beta}{1+k}|B^\prime(z)|\right\}& = Re\sum_{j=1}^{n}\left\{\frac{z}{z - z_j}\right\} -\left\{\frac{n -|B^\prime(z)}{2}\right\} + \frac{|B^\prime(z)|}{1+k}Re\left\{\beta\right\}  \\ & = \sum_{j=1}^{n}Re\left\{\frac{z}{z - z_j}\right\} -\frac{n}{2} + \frac{1}{2}\left\{1 + \frac{2Re(\beta)}{1+k}\right\}|B^\prime (z)|.
\end{align}
It can be easily verified that for $z\in T_1$ and $|z_j|\geq k\geq 1$,
\begin{align*}
Re\left\{\frac{z}{z - z_j}\right\}\leq \frac{1}{1 + |z_j|}. 
\end{align*}
Using this in inequality \eqref{14}, we get for $|\beta|\leq 1$ and for all $z \in T_1,     $ where $r(z)\ne 0$,
\begin{align}\label{15}
Re\left\{\frac{zr^\prime(z)}{r(z)} + \frac{\beta}{1+k}|B^\prime(z)|\right\} \leq \sum_{j=1}^{n}\left\{\frac{1}{1 +  |z_j|}\right\} - \frac{n}{2} + \frac{1}{2}\left\{1 + \frac{2Re(\beta)}{1+k}\right\}|B^\prime (z)|. 
\end{align}
Now if $r^{*}(z)=B(z)\overline{r(\frac{1}{\overline{z}})}$, then for $|\beta|\leq1$ and for all $z\in T_1$, where $r(z)\ne 0$, we have
\begin{align}\nonumber \label{16}
\left|\frac{(r^{*}(z))^{\prime}}{\overline{r(z)}}-\frac{\overline{\beta}}{1+k}|B^{\prime}(z)|\right|^{2} & =\left|\frac{1}{\overline{r(z)}}\left(zB^{\prime}(z)\overline{r(z)}-B(z)\overline{zr^{\prime}(z)}\right)-\frac{z\overline{\beta}B^{\prime}(z)}{(1+k)}\right|^{2} \\ &
=\left|\frac{zB^{\prime}(z)}{B(z)}\frac{\overline{r(z)}}{\overline{r(z)}}-\frac{B(z)\overline{zr^{\prime}(z)}}{B(z)\overline{r(z)}}-\frac{z\overline{\beta}B^{\prime}(z)}{B(z)(1+k)}\right|^{2} \ \  {\because |B(z)|=1 \ \textnormal{for} \ z\in T_1.}
\end{align}
In view of lemma \ref{l2}, equation \ref{16} gives for $|\beta|\leq1$ and for all $z\in T_1,$ where $r(z)\ne 0$,
\begin{align}\label{17}
\left|\frac{(r^{*}(z))^{\prime}}{\overline{r(z)}}-\frac{\overline{\beta}}{1+k}|B^{\prime}(z)|\right|^{2} & \nonumber = \left||B^{\prime}(z)|- \left(\frac{\overline{zr^{\prime}(z)}}{\overline{r(z)}}+ \frac{|B^{\prime}(z)|\ \overline{\beta}}{(1+k)}\right)\right|^{2} \\ & \nonumber
=\left||B^{\prime}(z)|- \left(\frac{zr^{\prime}(z)}{r(z)}+ \frac{|B^{\prime}(z)| \ \beta}{(1+k)}\right)\right|^{2} \\& 
=|B^{\prime}(z)|^{2}+ \left|\frac{zr^{\prime}(z)}{r(z)}+ \frac{|B^{\prime}(z)| \ \beta}{(1+k)}\right|^{2}-2|B^{\prime}(z)|\ Re\left\lbrace\frac{zr^{\prime}(z)}{r(z)}+|B^{\prime}(z)|\frac{\beta}{1+k}\right\rbrace.
\end{align}
Using inequality \eqref{15} in \eqref{17}, we get for $|\beta|\leq1$ and for all $z\in T_1$, where $r(z)\ne 0$,
\begin{align*}\nonumber
\left|\frac{(r^{*}(z))^{\prime}}{\overline{r(z)}}-\frac{\overline{\beta}}{1+k}|B^{\prime}(z)|\right|^{2} & \geq |B^{\prime}(z)|^{2}+ \left|\frac{zr^{\prime}(z)}{r(z)}+ \frac{|B^{\prime}(z)| \ \beta}{(1+k)}\right|^{2}-2|B^{\prime}(z)|\left\lbrace\sum\limits_{j=1}^{n}\frac{1}{1+|z_{j}|}-\frac{n}{2}+\frac{1}{2}\left\lbrace1+\frac{2Re(\beta)}{1+k}\right\rbrace|B^{\prime}(z)|\right\rbrace \\&
= \left|\frac{zr^{\prime}(z)}{r(z)}+ \frac{|B^{\prime}(z)| \ \beta}{(1+k)}\right|^{2}-2|B^{\prime}(z)|\left\lbrace\sum\limits_{j=1}^{n}\frac{1}{1+|z_{j}|}-\frac{n}{2}+\frac{Re(\beta)}{1+k}|B^{\prime}(z)|\right\rbrace.
\end{align*}
Equivalently for $|\beta|\leq1$ and for all $z\in T_1$, where $r(z)\ne 0$, we have
\begin{align}\label{18}
\left\lbrace\left|\frac{z(r^{\prime}(z)}{r(z)}+\frac{\beta}{1+k}|B^{\prime}(z)|\right|^{2}-2|B^{\prime}(z)|\left\lbrace\sum\limits_{j=1}^{n}\frac{1}{1+|z_{j}|}-\frac{n}{2}+\frac{Re(\beta)}{1+k}|B^{\prime}(z)|\right\rbrace\right\rbrace^{\frac{1}{2}}
\leq\left|\frac{\left(r^{*}(z)\right)^{\prime}}{\overline{r(z)}}
-\overline{\beta}\frac{B^{\prime}(z)}{1+k}\right|.
\end{align}
Since all the zeros of $r(z)$ lie in $T_k \cup D_{k+} \ , k \geq 1$, so all the zeros of $r^{*}(z)$ lie in $T_k \cup D_{ \frac{1}{k}-}$; $\frac{1}{k}\leq1$. Thus by applying lemma \ref{l5} to $r^{*}(z)$, we obtain for $z \in T_1$, where $r(z)\ne 0$,
\begin{align}
\left|\left(r^{*}(z)\right)^{\prime}\right| & \geq \nonumber \frac{1}{2}\left\lbrace|B^{\prime}(z)|+\frac{n\left(1-\frac{1}{k}\right)}{1+\frac{1}{k}}\right\rbrace|r^{*}(z)| \\ & 
\geq \nonumber \frac{|B^{\prime}(z)|}{k+1} \ |r^{*}(z)| \ \ \  \textnormal{for}\ k \ge 1.   
\end{align}
This implies for $|\beta|\leq1$ and using the fact $|r^{*}(z)|=|r(z)|$ for $z\in T_1,$ where $r(z)\ne 0$,
\begin{align}\nonumber
\left|\left(r^{*}(z)\right)^{\prime}\right|\geq \frac{|\beta||B^{\prime}(z)|}{k+1} \ |r(z)|.
\end{align}
In view of this, choosing the argument of $\beta$ in the right hand side of \eqref{18} such that
\begin{align}\nonumber
\left|\frac{\left(r^{*}(z)\right)^{\prime}}{\overline{r(z)}}-\overline{\beta}\frac{B^{\prime}(z)}{1+k}\right|=\left|\frac{\left(r^{*}(z)\right)^{\prime}}{r(z)}\right|- |\beta|\ \frac{|B^{\prime}(z)|}{1+k},
\end{align}
 we obtain from inequality \eqref{18}, for $|\beta|\leq1$ and for all $z\in T_1,$ where $r(z)\ne 0$, 
\begin{align*}\nonumber
\left\lbrace\left|\frac{zr^{\prime}(z)}{r(z)}+\frac{\beta}{1+k}|B^{\prime}(z)|\right|^{2}-2|B^{\prime}(z)|\left\lbrace\sum\limits_{j=1}^{n}\frac{1}{1+|z_{j}|}-\frac{n}{2}+\frac{Re(\beta)}{1+k}|B^{\prime}(z)|\right\rbrace\right\rbrace^{\frac{1}{2}} \leq \left|\frac{\left(r^{*}(z)\right)^{\prime}}{r(z)}\right|- |\beta|\ \frac{|B^{\prime}(z)|}{1+k}.
\end{align*}
This in combination with lemma \ref{l3} gives
\begin{align*}\nonumber
|r^{\prime}(z)|+|r(z)|\left\lbrace\left|\frac{z(r^{\prime}(z)}{r(z)}+\frac{\beta}{1+k}|B^{\prime}(z)|\right|^{2}-2|B^{\prime}(z)|\left( \sum_{j=1}^{m}\frac{1}{1+|z_{j}|}-\frac{n}{2}+\frac{Re{\beta}}{1+k}|B^{\prime}(z)|\right)\right\rbrace^{\frac{1}{2}}
\end{align*}
\begin{align}\nonumber
\leq \|r\| \ |B^{\prime}(z)|-\frac{|\beta|}{1+k}|B^{\prime}(z)|\ |r(z)|.
\end{align}
This gives for $|\beta|\leq1$ and for all $z\in T_1,$ where $r(z)\ne 0$,
\begin{align*}
\left\lbrace\left|\frac{z(r^{\prime}(z)}{r(z)}+\frac{\beta}{1+k}|B^{\prime}(z)|\right|^{2}-2|B^{\prime}(z)|\left( \sum_{j=1}^{n}\frac{1}{1+|z_{j}|}-\frac{n}{2}+\frac{Re{\beta}}{1+k}|B^{\prime}(z)|\right)\right\rbrace^{\frac{1}{2}} & \leq \frac{||r||}{|r(z)|}
|B^{\prime}(z)|-\left(\left|\frac{zr^{\prime}(z)}{r(z)}\right|+\frac{|\beta|}{1+k}|B^{\prime}(z)|\right)\\ &
\leq \frac{\|r\|}{|r(z)|}|B^{\prime}(z)|-\left|\frac{zr^{\prime}(z)}{r(z)}+\frac{\beta}{1+k}|B^{\prime}(z)|\right|.
\end{align*}
Which, after straightforward computation yields,
\begin{align*}
\left|\frac{z(r^{\prime}(z)}{r(z)}+\frac{\beta}{1+k}|B^{\prime}(z)|\right|\leq \frac{\|r(z)\|}{2|r(z)|}\left\lbrace|B^{\prime}(z)|-\frac{n|r(z)|^{2}}{\|r\|^{2}}+\frac{2|r(z)|^{2}}{\|r\|}\left(\sum\limits_{j=1}^{n}\frac{1}{1+|z_{j}|}+\frac{Re(\beta)}{1+k}|B^{\prime}(z)|\right)\right\rbrace.
\end{align*}
The above inequality is equivalent to
\begin{align*}
\begin{split}
 \left|\frac{z(r^{\prime}(z)}{r(z)}+\frac{\beta}{1+k}|B^{\prime}(z)|\right| { } & \leq \frac{\|r(z)\|}{2|r(z)|}\bigg\{|B^{\prime}(z)|-\frac{n|r(z)|^{2}}{\|r\|^{2}}\left(1-\frac{2}{k+1}\right)-\frac{n|r(z)|^{2}}{\|r\|^{2}}\frac{2}{k+1} \\ 
 & +\frac{2|r(z)|^{2}}{\|r\|^{2}}\left(\sum\limits_{j=1}^{n}\frac{1}{1+|z_{j}|}+\frac{Re(\beta)}{1+k}|B^{\prime}(z)|\right)\bigg\}.
 \end{split}
 \end{align*}
 This gives for $|\beta|\leq 1$ and for all $z\in T_1,$ where $r(z)\ne 0$,
 \begin{align*}
 \left|\frac{z(r^{\prime}(z)}{r(z)}+\frac{\beta}{1+k}|B^{\prime}(z)|\right|\leq \frac{||r(z)||}{2|r(z)|}\left\lbrace|B^{\prime}(z)|-\frac{n|r(z)|^{2}}{||r||^{2}}\frac{(k-1)}{(k+1)} -\frac{2|r(z)|^{2}}{\|r\|^{2}}\left(\frac{n}{k+1}-\sum\limits_{j=1}^{n}\frac{1}{1+|z_{j}|}-\frac{Re(\beta)}{k+1}|B^{\prime}(z)\right)\right\rbrace.
 \end{align*}
This completes the proof of Theorem $\ref{TE11}$.
\end{proof}

\end{document}